\newtheorem{theorem}{Theorem}[section]
\newtheorem{corollary}[theorem]{Corollary}
\newtheorem{definition}[theorem]{Definition}
\newtheorem{lemma}[theorem]{Lemma}
\newtheorem{question}[theorem]{Question}
\numberwithin{equation}{section}
\begin{document}
\title[Three-manifolds with positive scalar curvature]{Geometry of
three-dimensional manifolds with positive scalar curvature}
\author{Ovidiu Munteanu and Jiaping Wang}

\begin{abstract}
The purpose of this paper is to derive volume and other geometric
information for three-dimensional complete manifolds with positive scalar
curvature. In the case that the Ricci curvature is nonnegative, it is shown
that the volume of the manifold must be of linear growth when the scalar
curvature is bounded from below by a positive constant. This answers a
question of Gromov in the affirmative for dimension three. Volume growth
estimates are also obtained for the case when scalar curvature decays to
zero. In fact, results of similar nature are established for the more
general case that the Ricci curvature is asymptotically nonnegative.
\end{abstract}

\address{Department of Mathematics, University of Connecticut, Storrs, CT
06268, USA}
\email{ovidiu.munteanu@uconn.edu}
\address{School of Mathematics, University of Minnesota, Minneapolis, MN
55455, USA}
\email{jiaping@math.umn.edu}
\maketitle

\section{Introduction}

One of our purposes in this paper is to provide an affirmative answer
in the case of dimension $n=3$ to the
following question posed by Gromov \cite{G}.

\begin{question}
Let $(M^{n},g)$ be a complete manifold with nonnegative Ricci curvature. If
its scalar curvature $S\geq 1,$ is there a positive constant $C$ such that 
$\mathrm{V}_{p}(R)\leq C\,R^{n-2}$ for all $R>0,$ where $\mathrm{V}_{p}(R)$
is the volume of the geodesic ball $B_{p}(R)$ centered at a point $p$ and of
radius $R$?
\end{question}

In fact, Yau \cite{Y} asked even more ambitiously whether

\begin{equation*}
\limsup_{R\rightarrow \infty }R^{2-n}\,\int_{B_{p}(R)}S<\infty
\end{equation*}%
on a complete manifold $(M^{n},g)$ with nonnegative Ricci curvature.

In this paper we establish the following result.

\begin{theorem}
\label{A5} Let $(M^3,g)$ be a complete three-dimensional manifold with
non-negative Ricci curvature.

\begin{itemize}
\item If the scalar curvature is bounded below by $S\geq 1$ on $M,$ then
there exists a universal constant $C>0$ such that 
\begin{equation*}
\mathrm{V}_{p}\left( R\right) \leq C\,R
\end{equation*}%
for all $R>0$ and $p\in M.$

\item If the scalar curvature is bounded below by 
\begin{equation*}
S\left( x\right) \geq \frac{C_0}{r^{\alpha }\left( x\right) +1}
\end{equation*}%
for some $\alpha \in \left[ 0,1\right] $ and $C_0>0$, where $r(x)$ is the geodesic
distance from $x$ to a fixed point $p\in M,$ then%
\begin{equation*}
\mathrm{V}_{p}\left( R_{i}\right) \leq \frac{C}{C_0}\,R_{i}^{\alpha +1}
\end{equation*}%
for a sequence $R_{i}\rightarrow \infty$, where $C$ is a universal constant. 
\end{itemize}
\end{theorem}

In fact, we consider the more general case that the Ricci curvature is only
assumed to be asymptotically nonnegative, namely, the Ricci curvature satisfies 
\begin{equation}\label{Ricci_bd_k}
\mathrm{Ric}(x)\geq -k\left(r(x)\right),
\end{equation}
 for a nonnegative non-increasing function $
k(r)$ with $$\int_{0}^{\infty} r\,k(r)\,dr<\infty.$$

\begin{theorem}
\label{A2} Let $(M^3,g)$ be a complete three-dimensional manifold with finite
first Betti number and finitely many ends. Suppose that its Ricci curvature
is asymptotically nonnegative. If the scalar curvature $S$ is bounded below
by a positive constant, then $M$ is parabolic, i.e., it does not admit any
positive Green's function.
\end{theorem}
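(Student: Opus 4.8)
\medskip
\noindent\textit{Plan of proof.} I would argue by contradiction: suppose $M$ is nonparabolic. Then at least one end $E$ is nonparabolic, and on $E$ one has the capacity potential $u$, i.e.\ the bounded harmonic function with $u=1$ on the compact hypersurface $\partial E$, $0<u<1$, and $\inf_E u=0$. Because the Ricci curvature is asymptotically nonnegative, the end $E$ is subject to the usual geometric analysis near infinity (volume comparison, Poincar\'e and Harnack inequalities on annuli), which forces $u(x)\to 0$ as $x\to\infty$ in $E$; hence $u$ is proper on $E$, and its level sets $\Sigma_t=\{u=t\}$, $t\in(0,1)$, are compact and, for a.e.\ $t$, smooth closed surfaces exhausting $E$ as $t\to 0$. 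Integrating $\Delta u=0$ between two level sets shows that the flux $\mathcal{F}=\int_{\Sigma_t}|\nabla u|\,dA$ is independent of the regular value $t$, and $\mathcal{F}>0$ since $u$ is nonconstant. Finally, the hypotheses that $M$ has finitely many ends and $b_1(M)<\infty$ bound the number of components and the genus of $\Sigma_t$, so there is a constant $\chi_0$ with $\chi(\Sigma_t)\le\chi_0$ for all small regular $t$; critical values of $u$ are handled by the standard approximation, as in \cite{S,BKHS,MW}.

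The analytic engine is a level-set identity coming from the Bochner formula in dimension three. Write $f=|\nabla u|$, $\nu=\nabla u/f$, and let $A$, $H$, $K_{\Sigma_t}$ be the second fundamental form, mean curvature, and Gauss curvature of $\Sigma_t$. Using $\Delta u=0$, the Gauss equation, and the three-dimensional identity $\mathrm{Ric}(\nu,\nu)=\tfrac12 S-K_{\Sigma_t}+\det A$, a direct computation gives, on $\{f>0\}$,
\[
\Delta f \;=\; \frac{|\nabla_{\Sigma_t}f|^2}{f}\;+\;f\Big(\tfrac12\big(|A|^2+H^2\big)+\tfrac12 S-K_{\Sigma_t}\Big).
\]
Integrating this over $\{a<u<b\}$, applying the co-area formula on the right and the divergence theorem on the left (where $\partial_\nu f=-fH$), then Gauss--Bonnet $\int_{\Sigma_t}K_{\Sigma_t}\,dA=2\pi\chi(\Sigma_t)$, and finally discarding the manifestly nonnegative terms $|\nabla_{\Sigma_t}\log f|^2$ and $\tfrac12(|A|^2+H^2)$, I obtain for a.e.\ $t$
\[
\frac12\int_{\Sigma_t}S\,dA \;\le\; 2\pi\chi(\Sigma_t)-\frac{d}{dt}\int_{\Sigma_t}fH\,dA \;\le\; 2\pi\chi_0-\frac{d}{dt}\int_{\Sigma_t}fH\,dA .
\]

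Next I would feed in the geometry of $E$. Since $\mathrm{Ric}\ge-k(r)$ with $\int_0^\infty rk(r)\,dr<\infty$ (which forces $r^2k(r)\to 0$), near infinity $E$ behaves like a manifold with nonnegative Ricci curvature: the Cheng--Yau gradient estimate gives $|\nabla u|\le C_1\,u/r$; the Harnack inequality makes $u$ essentially radial, so each $\Sigma_t$ lies in an annulus $\{c_1 r_t\le r\le r_t\}$ with $r_t\to\infty$ as $t\to 0$; volume comparison gives $\mathrm{Area}(\Sigma_t)\le C_1 r_t^2$; and a Hessian-type estimate for $u$ gives $|H|\le C_1/r$ on $\Sigma_t$. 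Two consequences follow. First, $\mathcal{F}=\int_{\Sigma_t}f\,dA\le(\sup_{\Sigma_t}f)\,\mathrm{Area}(\Sigma_t)\le \tfrac{C_1 t}{c_1 r_t}\,\mathrm{Area}(\Sigma_t)$, so $\mathrm{Area}(\Sigma_t)\ge c_2\,\mathcal{F}\,r_t/t$; combined with the hypothesis $S\ge(C-\varepsilon)/r$ on $\Sigma_t\subset\{r\le r_t\}$ (valid for $t$ small, since $\liminf S\,r\ge C$), this gives
\[
\int_{\Sigma_t}S\,dA \;\ge\; \frac{C-\varepsilon}{r_t}\,\mathrm{Area}(\Sigma_t)\;\ge\;\frac{\delta_0}{t},\qquad \delta_0:=c_2(C-\varepsilon)\,\mathcal{F}>0 .
\]
Second, $\bigl|\int_{\Sigma_t}fH\,dA\bigr|\le(\sup_{\Sigma_t}f)(\sup_{\Sigma_t}|H|)\,\mathrm{Area}(\Sigma_t)\le \tfrac{C_1 t}{c_1 r_t}\cdot\tfrac{C_1}{c_1 r_t}\cdot C_1 r_t^2\le C_2\,t$, which tends to $0$ as $t\to 0$.

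Combining the two displays and integrating over $t\in(a,b)$, with $b$ fixed small enough that $\{u<b\}$ lies outside the compact set on which the scalar curvature bound may fail, gives
\[
\frac{\delta_0}{2}\log\frac{b}{a}\;\le\;\int_a^b\Big(\tfrac12\int_{\Sigma_t}S\,dA\Big)dt\;\le\;2\pi\chi_0(b-a)+\int_{\Sigma_a}fH\,dA-\int_{\Sigma_b}fH\,dA .
\]
Letting $a\to 0$, the left-hand side tends to $+\infty$ while the right-hand side stays bounded (the boundary terms tend to $0$). This contradiction forces $\delta_0=0$, i.e.\ $\mathcal{F}=0$, which is impossible for the nonconstant function $u$; hence $M$ must be parabolic. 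I expect the real difficulty to lie entirely in the geometric estimates of the third paragraph, and above all in the mean-curvature control $|H|\le C_1/r$ for the level sets of $u$: whereas the gradient estimate is a direct consequence of the curvature lower bound, pointwise Hessian estimates for harmonic functions normally demand two-sided curvature control, so here one must genuinely exploit the integrability $\int_0^\infty rk(r)\,dr<\infty$ together with the special structure of the capacity potential (the monotonicity and asymptotic radiality of Green's-function-type functions under nonnegative Ricci curvature), quite possibly settling for control along a well-chosen sequence $t_j\to 0$ and for integrated rather than pointwise bounds.
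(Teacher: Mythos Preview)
Your overall strategy---derive a finite bound on $\int S\,|\nabla u|$ from the level-set Bochner/Gauss--Bonnet identity and then contradict it using $S\gtrsim 1/r$ together with the constancy of the flux---is exactly the paper's. But two steps that you treat as routine preliminaries are in fact the core obstacles, and the paper's proof is engineered precisely to circumvent them.

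\medskip
\textbf{Properness of the barrier.} You assert that asymptotically nonnegative Ricci curvature forces the capacity potential $u$ on a nonparabolic end to tend to $0$ at infinity, so that its sublevel sets are compact. The paper explicitly declines to assume this: see the introduction (``Since $f$ may not be proper, as above we consider $u=f\psi$ instead'') and the entire setup of Section~4. Properness of the minimal Green's function is known when $\mathrm{Ric}\ge 0$ (Li--Yau), and the paper remarks that in that special case a short argument along your lines is available (Theorem~5.6 and the sentence preceding it). Under merely asymptotically nonnegative Ricci it is not claimed, and your Harnack/annular description of the level sets, the area lower bound $\mathrm{Area}(\Sigma_t)\gtrsim r_t/t$, and hence the crucial estimate $\int_{\Sigma_t}S\ge \delta_0/t$, all rest on it. This is a genuine gap, not a technicality.

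\medskip
\textbf{The mean-curvature bound.} The estimate $|H|\le C_1/r$ on $\Sigma_t$, which you correctly identify as the hard point, amounts to pointwise control of $\nabla^2 u$; no such estimate is available under a one-sided Ricci bound $\mathrm{Ric}\ge -C/r^2$. The paper never needs it. Instead of carrying the boundary term $\int_{\Sigma_t} fH$, the paper multiplies $f$ by a cutoff $\psi=\psi(\eta)$ built from a distance-like function $\eta$ satisfying $|\nabla\eta|\le C/(r+1)$, $|\Delta\eta|\le C/(r+1)^2$, and $|\nabla\Delta\eta|\le C/(r+1)^3+C|\nabla^2\eta|/(r+1)$ (Lemma~4.1), and works with $u=f\psi$. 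All second-order quantities then appear only in integrated form, and $\int|\nabla^2 f|^2\,r\,\phi^2$ and $\int|\nabla^2\eta|^2\,r\,\phi^2$ are bounded by straightforward integration by parts using the Ricci lower bound, the energy bound $\int|\nabla f|^2<\infty$, and the area growth $\mathrm{A}_p(t)\le Ct^2$. The output is $\int_{M\setminus D}S\,|\nabla f|\le C$, which contradicts
\[
\int_{B_p(T)\setminus B_p(R_0)}S\,|\nabla f|\;\ge\;\frac{C_0}{2}\int_{R_0}^{T}\frac{1}{r}\Bigl|\int_{\partial B_p(r)}\frac{\partial f}{\partial r}\Bigr|\,dr\;=\;\frac{C_0C_1}{2}\log\frac{T}{R_0}.
\]

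\medskip
In short, your sketch assumes away the two difficulties that the paper's machinery (the modified function $u=f\psi$ and the second-order distance-like function $\eta$) is designed to handle. Your closing remark that one might have to ``settle for integrated rather than pointwise bounds'' is prescient: that is exactly what the paper does, but via cutoffs and integration by parts over $M$ rather than via direct geometric control of individual level sets.
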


Our next result relates the lower bound of the scalar curvature with the
volume of unit balls.

\begin{theorem}
\label{A3} Let $(M^3,g)$ be a complete three-dimensional manifold with finite
first Betti number and finitely many ends. Suppose that its Ricci curvature
is asymptotically nonnegative. Then there exists a constant $C$ depending only on the function 
$k$ in (\ref{Ricci_bd_k}) such that the scalar curvature $S$ of $M$ satisfies
\begin{equation*}
\liminf_{x\rightarrow \infty }S\left( x\right) \leq \frac{C}{\mathrm{V}
_{p}\left( 1\right) },
\end{equation*}%
where $\mathrm{V}_{p}\left( 1\right) $ denotes the volume of the unit ball 
$B_{p}\left( 1\right) .$
\end{theorem}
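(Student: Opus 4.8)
The plan is to carry out the Schoen--Yau level-set argument together with the Bochner identity for $\log|\nabla u|$ along the level sets of a suitably chosen harmonic function $u$, exactly in the spirit of the proof of Theorem~\ref{A2}, and then to convert the resulting area bound for the level sets into a bound for $\mathrm{V}_p(1)$ by volume comparison. First I would dispose of the trivial case: if $\liminf_{x\to\infty}S(x)\le 0$ there is nothing to prove, so assume $\sigma:=\liminf_{x\to\infty}S(x)>0$. Then $S\ge\sigma$ on $M\setminus B_p(r_0)$ for some $r_0$, and since in this case $\liminf_{x\to\infty}S(x)r(x)=\infty$, Theorem~\ref{A2} shows that $M$ is parabolic. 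It then suffices to produce a constant $C=C(k)$ with $\sigma\,\mathrm{V}_p(1)\le C$.

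Next I would fix the harmonic function to work with. Using that $M$ has finitely many ends, that $b_1(M)<\infty$, and parabolicity, one produces such a function: either a proper harmonic function on $M$ (from Li--Tam theory when $M$ has at least two ends, or from a harmonic map to a circle when $b_1(M)\ge 1$), or, in the remaining case, the Green's function $u=G_{R_i}$ with a single pole at $p$ on an exhausting sequence of balls $B_p(R_i)$, with $G_{R_i}=0$ on $\partial B_p(R_i)$. In every case the level sets $\Sigma_t=u^{-1}(t)$ are compact, carry a fixed positive flux $\Phi=\int_{\Sigma_t}|\nabla u|$, and escape every compact set as $t$ ranges over an unbounded interval; by Sard's theorem $\Sigma_t$ is a smooth closed surface for a.e. $t$.

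The analytic core is the pointwise inequality, valid away from the critical set of $u$,
\begin{equation*}
S\ \le\ 2\,\Delta\log|\nabla u|+2K_{\Sigma_t}+\tfrac{1}{2}\,|\nabla\log|\nabla u||^{2},
\end{equation*}
where $K_{\Sigma_t}$ denotes the Gauss curvature of the level set; this follows by combining the Bochner formula for the harmonic function $u$, the Gauss equation for $\Sigma_t$, and the refined Kato inequality $|\nabla^2 u|^2\ge\tfrac32|\nabla|\nabla u||^2$ for harmonic functions in dimension three. Integrating against $|\nabla u|$ over a slab $\{a<u<b\}$, using the co-area formula and the Gauss--Bonnet theorem on each $\Sigma_t$, one obtains an estimate of the shape
\begin{equation*}
\int_a^b\!\!\int_{\Sigma_t}S\,dA\,dt\ \le\ 4\pi\int_a^b\chi(\Sigma_t)\,dt\ +\ 2\,(E_a-E_b),\qquad E_t:=\int_{\Sigma_t}|\nabla u|\,H_{\Sigma_t}\,dA .
\end{equation*}
For the range of $t$ with $\Sigma_t\subset M\setminus B_p(r_0)$ the left side is at least $\sigma\int|\Sigma_t|\,dt$. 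Now three facts tame the right side: the boundary term $E_b$ stays bounded below as we exhaust $M$ (one rewrites $E_b$ via the divergence theorem and controls it with $\mathrm{Ric}\ge-k(r)$, $\int_0^\infty rk(r)\,dr<\infty$, and parabolicity, which also kills the outer boundary term and the pole in the exhaustion case); the finiteness of the number of ends bounds $\chi(\Sigma_t)$ from above by a constant $N$ depending only on the topology of $M$ (the hypothesis $b_1(M)<\infty$ enters in controlling the components and the topology of the ends, hence $N$, and in the construction of $u$); and letting $b\to\infty$ the term $2(E_a-E_b)$ becomes negligible against $(b-a)$. Dividing by $b-a$ and letting $b\to\infty$ then yields $\sigma|\Sigma_t|\le 4\pi N+o(1)$ along a sequence of regular values $t\to\infty$; that is, there are arbitrarily far-out level sets with $|\Sigma_t|\le C_1(k)/\sigma$.

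Finally I would convert this into the unit-ball bound. Here one uses the geometry forced by asymptotically nonnegative Ricci curvature together with $S\ge\sigma>0$: by the relative volume comparison (Bishop--Gromov type, with constants depending only on $k$), combined with the fact that the level sets $\Sigma_t$ have fixed flux $\Phi$ and uniformly bounded area, the manifold has at most linear volume growth with rate controlled by $C_1(k)/\sigma$, so that $\mathrm{V}_p(1)\le\mathrm{V}_p(r_0)\le C(k)/\sigma$; taking this $C(k)$ completes the proof. The two genuinely delicate points I expect are: (a) the \emph{uniform} control of the topology of the non-compact family $\{\Sigma_t\}$ — bounding $\chi(\Sigma_t)$ above independently of $t$ (and of $R_i$ in the exhaustion case), and in particular handling possible spherical components — which is exactly where ``finitely many ends'' and $b_1(M)<\infty$ are indispensable; and (b) the last step, passing from the level-set area bound to the unit-ball volume bound, which requires the volume comparison for asymptotically nonnegative Ricci curvature and a comparison between the level sets of $u$ and geodesic spheres.
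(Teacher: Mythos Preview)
Your overall plan---reduce to the parabolic case via Theorem~\ref{A2}, pick a proper harmonic function $u$ on an end (Nakai's function), run the level-set Bochner/Gauss--Bonnet argument, and then feed the output back into a statement about $\mathrm{V}_p(1)$---matches the paper's strategy. But there is a genuine gap at the very point you flag as ``delicate,'' and it is exactly the step the paper spends most of its effort on.

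What is missing is a gradient bound for $u$ in terms of $\mathrm{V}_p(1)$. The paper proves (Lemma~\ref{MR} plus an iteration) that, after normalizing the flux to $\int_{l(t)}|\nabla u|=1$, one has
\[
\sup_{M\setminus B_p(R_2)}|\nabla u|\ \le\ \frac{C}{\mathrm{V}_p(1)},
\]
using the mean value inequality for $|\nabla u|$ (since $\Delta|\nabla u|\ge -Cr^{-2}|\nabla u|$) together with the Li--Tam volume comparison $\mathrm{V}_x(R/2)\ge C^{-1}R\,\mathrm{V}_p(1)$ for $r(x)=R$. This gradient bound does two jobs that your sketch leaves open. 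First, it is what controls the boundary term: in your notation $E_b$ involves $\int_{\Sigma_b}|\nabla u|\,H$, equivalently $w'(t)$ where $w(t)=\int_{l(t)}|\nabla u|^2$; the paper shows $w(t)\le C/\mathrm{V}_p(1)$ is bounded, hence $w'(t_i)$ is bounded along a sequence (Corollary~\ref{SL}). Without an a priori bound on $|\nabla u|$ you have no handle on $E_b$, and ``parabolicity kills the outer boundary term'' does not address this interior boundary. Second, and more importantly, the gradient bound gives $u(x)\le (C/\mathrm{V}_p(1))\,r(x)$, which is what converts between $u$-levels and geodesic balls: $B_p(R)\subset L\bigl(1,\,CR/\mathrm{V}_p(1)\bigr)$. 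The paper then compares the two estimates
\[
\int_{B_p(R)\setminus B_p(R_0)}S|\nabla u|\ \le\ \frac{C}{\mathrm{V}_p(1)}R+\Upsilon
\qquad\text{and}\qquad
\int_{B_p(R)\setminus B_p(R_0)}S|\nabla u|\ \ge\ \frac{\Gamma}{2\mathrm{V}_p(1)}(R-R_0),
\]
the second coming from $S\ge\Gamma/(2\mathrm{V}_p(1))$ and $\int_{\partial B_p(r)}\partial_r u=1$, to force $\Gamma\le C$.

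Your proposed final step---from an area bound $|\Sigma_{t_j}|\le C_1/\sigma$ for a sequence $t_j\to\infty$ to $\mathrm{V}_p(1)\le C/\sigma$---does not go through as stated. A small separating surface does not by itself bound the volume of the region it bounds (dumbbell), and ``volume comparison plus fixed flux plus bounded area'' gives no inequality of the form $\mathrm{V}_p(R)\le (C/\sigma)R$ without first knowing how $u$ grows in $r$, which is precisely the missing gradient estimate. So the argument, as written, stops short of the conclusion; the fix is to insert the mean-value/iteration step for $|\nabla u|$ before attempting any comparison with geodesic balls.
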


It should be pointed out that the assumption of the Ricci curvature being
asymptotically nonnegative is necessary for Theorem \ref{A3} to hold.
Indeed, consider a complete three-dimensional manifold $\left( M,g\right) $
which is isometric to $\left[ 2,\infty \right) \times \mathbb{S}^{2}$ with
the warped product metric $ds_{M}^{2}=dt^{2}+\frac{1}{\ln t}ds_{\mathbb{S}%
^{2}}^{2}$ outside a compact set. Then its scalar curvature $S\left(
x\right) \rightarrow \infty $ as $x\rightarrow \infty $. Note that its Ricci
curvature satisfies $\mathrm{Ric}(x)\geq -k\left( r\left( x\right) \right) $
for $k\left( r\right) =\frac{C}{r^{2}\ln r}$ when $r$ is sufficiently large.
We also remark that the upper bound estimate of scalar curvature $S$ in
terms of the volume $\mathrm{V}_{p}(1)$ is sharp by considering the example
of cylinder $\mathbb{R}\times \mathbb{S}^{2}(r),$ where $\mathbb{S}^{2}(r)$
is the sphere of radius $r$ in Euclidean space $\mathbb{R}^{3}$.

We now indicate some of the ideas involved in the proofs. The proof of
Theorem \ref{A5} uses a variant of the following monotonicity result in \cite%
{MW}.

\begin{theorem}
\label{A4} Let $\left( M^3,g\right) $ be a complete noncompact
three-dimensional manifold with nonnegative scalar curvature. Assume that $M$
has one end and its first Betti number $b_{1}\left( M\right) =0.$ If $M$ is
nonparabolic and the minimal positive Green's function $G\left( x\right)
=G\left( p,x\right) $ satisfies $\lim_{x\rightarrow \infty }G(x)=0,$ then%
\begin{equation*}
\frac{d}{dt}\left( \frac{1}{t}\int_{\{G=t\} }\left\vert \nabla
G\right\vert ^{2}-4\pi t\right) \leq 0
\end{equation*}%
for all $t>0.$ Moreover, equality holds for some $T>0$ if and only if the
super level set $\left\{ G>T\right\} $ is isometric to a ball in
the Euclidean space $\mathbb{R}^{3}.$
\end{theorem}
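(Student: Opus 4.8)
The plan is to adapt the Schoen--Yau level-set technique to the minimal positive Green's function $G$ under the stated topological hypotheses. First I would fix a regular value $t>0$ of $G$. Since $M$ is nonparabolic with $G\to 0$ at infinity and $b_1(M)=0$ with one end, I would argue that the regular level set $l(t)=\{G=t\}$ is a connected closed surface: connectedness follows because a separating piece would bound a region where $G$ is harmonic and contradict the maximum principle together with $G\to 0$, and there is only one such component on the single end. Then I would combine the Bochner formula for $|\nabla G|$ with the decomposition \eqref{a1} of the Ricci term, integrate over $l(t)$ using the coarea formula, and invoke the Gauss--Bonnet theorem $\int_{l(t)} S_{l(t)} = 4\pi\chi(l(t)) \le 4\pi$ (using that $l(t)$ has genus $\ge 0$, hence $\chi \le 2$; in fact one expects $l(t)\cong S^2$ so $\chi=2$). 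The nonnegativity of $S$ and of the term $|\nabla|\nabla G||^2 - \frac12|\nabla^2 G|^2$ after it is recombined via the refined Kato inequality in dimension three (this is the place where $n=3$ and harmonicity of $G$ are essential) should yield a differential inequality for $\Phi(t) := \frac{1}{t}\int_{l(t)}|\nabla G|^2$.

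More precisely, the second key step is to compute $\frac{d}{dt}\int_{l(t)}|\nabla G|^2$ and $\frac{d}{dt}\int_{l(t)}|\nabla G|$ by differentiating along the flow of $\nabla G/|\nabla G|^2$, using $\Delta G = 0$ to control boundary terms, and then to insert the integrated Bochner identity. The normalization $4\pi t$ comes from matching the Euclidean model: on a round ball in $\mathbb{R}^3$ one has $G(x) = \frac{1}{4\pi}(\frac{1}{|x|} - \frac{1}{R})$ up to scaling, the level sets are spheres, and $\frac{1}{t}\int_{l(t)}|\nabla G|^2 = 4\pi t$ identically. I would show that the monotone quantity is $\frac{1}{t}\int_{l(t)}|\nabla G|^2 - 4\pi t$ by tracking all error terms and checking the signs: the Gauss--Bonnet contribution gives $+4\pi\chi \le 8\pi$ but a factor of $\frac12$ from \eqref{a1} and a second derivative structure reduces this to the claimed $4\pi t$ after integrating the ODE.

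For the rigidity statement, equality for some $T>0$ forces, step by step: $S \equiv 0$ on the super level set $\{G>T\}$; the refined Kato inequality to be an equality, which means $|\nabla G|$ is (locally) a function of $G$ alone and the Hessian of $G$ has a very rigid form (a multiple of the metric in the directions tangent to the level sets); $\chi(l(t)) = 2$ for all $t > T$, so each level set is a sphere; and the induced metric on each $l(t)$ is round. Unwinding these constraints, I would show the metric on $\{G>T\}$ is a warped product $dr^2 + \phi(r)^2 g_{S^2}$ with $\phi$ forced by the Hessian condition to be linear in $r$, hence the region is a Euclidean ball, and conversely a Euclidean ball visibly saturates the inequality.

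The main obstacle I anticipate is controlling the boundary/topology of the level sets without the finite-ends and $b_1 = 0$ assumptions being trivially enough: one must rule out the level set $l(t)$ having extra components or higher genus, and carefully justify that the relevant component containing $p$ in its interior is compact and that $\chi \le 2$ can be used with the correct sign in the differential inequality. The second delicate point is the exact bookkeeping of the derivative $\frac{d}{dt}\Phi(t)$ — ensuring that after using Bochner, Gauss--Bonnet, and the Kato inequality, the residual terms assemble with the right constant so that precisely $\Phi(t) - 4\pi t$ is monotone, rather than some other affine modification; this is where a clean choice of test quantity and an integration by parts in $t$ will be needed.
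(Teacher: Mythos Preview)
Your outline captures the right ingredients---Bochner formula, the Schoen--Yau rewriting \eqref{a1} of the Ricci term, Gauss--Bonnet on the connected level surfaces, and the refined Kato inequality $|\nabla^2 G|^2 \ge \tfrac32\,|\nabla|\nabla G||^2$---and these are exactly what the paper (more precisely, its reference \cite{MW}, together with the analogous argument carried out in the proof of Lemma~\ref{M} in Section~\ref{sect4}) uses. Your treatment of the topology of the level sets and of the rigidity case is also along the correct lines.

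There is, however, a genuine gap at the step you flag as mere ``bookkeeping.'' After combining Bochner with \eqref{a1}, applying Gauss--Bonnet, and invoking Kato, what survives on a regular level set $l(t)$ is (with $w(t)=\int_{l(t)}|\nabla G|^2$) an inequality of the shape
\[
\int_{l(t)}\frac{|\nabla^2 G|^2 - |\nabla|\nabla G||^2 + \mathrm{Ric}(\nabla G,\nabla G)}{|\nabla G|^2}
\;\ge\; \tfrac12\!\int_{l(t)}S \;-\; 4\pi \;+\; \tfrac34\!\int_{l(t)}\frac{|\nabla|\nabla G||^2}{|\nabla G|^2}.
\]
The last term is not directly expressible in terms of $w$ and $w'$, and this is where the proof requires an idea beyond integration by parts. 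The paper's route (see the proof of Lemma~\ref{M}) is: first Cauchy--Schwarz,
\[
|w'(t)| \le \int_{l(t)}\big|\nabla|\nabla G|\big|
\le \Big(\int_{l(t)}\frac{|\nabla|\nabla G||^2}{|\nabla G|^2}\Big)^{1/2}\Big(\int_{l(t)}|\nabla G|^2\Big)^{1/2},
\]
giving $\int_{l(t)}|\nabla|\nabla G||^2/|\nabla G|^2 \ge (w')^2/w$; and second, the elementary inequality
\[
\frac{(w')^2}{w} \;\ge\; \frac{4}{t}\,w' - \frac{4}{t^2}\,w,
\]
which is just $(tw'-2w)^2\ge 0$. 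Only after these two steps does one obtain a closed differential inequality for $w$ from which the monotone quantity $\tfrac{1}{t}w(t)-4\pi t$ falls out with the correct constant. Your proposal does not anticipate either step, and without them the ``residual terms'' will not assemble into the claimed monotonicity; this is the substantive missing idea rather than a matter of tracking signs. A small related correction: the expression $|\nabla|\nabla G||^2 - \tfrac12|\nabla^2 G|^2$ appearing in \eqref{a1} is not itself nonnegative; rather, when \eqref{a1} is added to the Bochner term $(|\nabla^2 G|^2 - |\nabla|\nabla G||^2)/|\nabla G|^2$ one is left with $\tfrac12|\nabla^2 G|^2/|\nabla G|^2$, and it is to this that Kato is applied.
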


This type of result has been applied in \cite{AMO} to reprove the positive
mass theorem in \cite{SY2}. Recently, using a variant of the above theorem,
Chodosh and Li \cite{CL} have affirmed a conjecture of Schoen that a stable
minimal hypersurface in Euclidean space $\mathbb{R}^4$ must be flat.
Historically, more general versions of monotonicity formulas were
established by Colding \cite{Co} and Colding-Minicozzi \cite{CM1} for $n$%
-dimensional manifolds with nonnegative Ricci curvature and applied to the
study of uniqueness of the tangent cones for Ricci flat manifolds with
Euclidean volume growth \cite{CM2}. We refer the readers to \cite{CM} for an
exposition on monotonicity formulas in geometric analysis, and \cite{AFM}
for their applications to Willmore type inequalities.

The proof of Theorem \ref{A4} relies on a crucial observation that on a
three-dimensional manifold
\begin{equation}
\mathrm{Ric}\left( \nabla u,\nabla u\right) \left\vert \nabla u\right\vert
^{-2}=\frac{1}{2}S-\frac{1}{2}S_{t }+\frac{1}{\left\vert
\nabla u\right\vert ^{2}}\left( \left\vert \nabla \left\vert \nabla
u\right\vert \right\vert ^{2}-\frac{1}{2}\left\vert \nabla
^{2}u\right\vert^{2}\right)  \label{a1}
\end{equation}
for any harmonic function $u,$ where $S_{t }$ is the scalar
curvature of the level set $l\left(t\right)$ of $u.$ The proof then proceeds
by applying (\ref{a1}) to the Green's function $G$ and integrating the following
Bochner formula over the level sets $l(t)$ of $G.$
\begin{equation*}
\Delta \left\vert \nabla G\right\vert =\left( \left\vert G_{ij}\right\vert
^{2}-\left\vert \nabla \left\vert \nabla G\right\vert \right\vert
^{2}\right) \left\vert \nabla G\right\vert ^{-1}+\mathrm{Ric}\left( \nabla
G,\nabla G\right) \left\vert \nabla G\right\vert ^{-1}.
\end{equation*}%
The term $S_{t }$ is handled with the help of the
Gauss-Bonnet theorem as the assumptions ensure $l(t)$ is compact and
connected.

The idea of rewriting the Ricci curvature term as (\ref{a1}) has origin in
Schoen and Yau \cite{SY}, where they made the important observation that on
a minimal surface $N$ in a three-dimensional manifold $M,$
\begin{equation}
\mathrm{Ric}\left( \nu ,\nu \right) =\frac{1}{2}S-\frac{1}{2}S_{N}-\frac{1}{2%
}\left\vert A\right\vert ^{2}  \label{a2}
\end{equation}%
with $\nu ,$ $S_{N}$ and $A$ being the unit normal vector, the scalar
curvature and the second fundamental form of $N,$ respectively. This
observation enabled them to classify compact stable minimal surfaces in a
three-dimensional manifold with nonnegative scalar curvature. In fact, it
can be used to reprove the well-known classification by Fisher-Colbrie and
Schoen \cite{FS} for complete stable minimal surfaces as well (see \cite{LW4}).
 More generally, an identity of the nature (\ref{a2}) was derived for any
surface $N,$ not necessarily minimal, by Jezierski and Kijowski in \cite{JK}.
 It was applied to level sets of suitably chosen functions to prove both
positive energy and positive mass results in \cite{JK, J}. Recently, this
identity was rediscovered by Stern \cite{S} for the level sets of harmonic
functions. In \cite{BKHS} it has been subsequently used to reprove the
positive mass theorem of Schoen and Yau \cite{SY2}.

The observation we make here is that Theorem \ref{A4} can be refined and
localized to an end of $M.$ Applying it to the so-called barrier functions
of ends then yields Theorem \ref{A5}. Recall by Li and Tam \cite{LT1} that
each end $E$ of a complete manifold admits a barrier function, namely, a
positive harmonic function $u$ satisfying $u=0$ on the boundary of $E.$ Such 
$u$ is bounded if and only if $E$ is nonparabolic. On the other hand, by a
result of Nakai \cite{N}, $u$ can be chosen to be proper in the case $E$ is
parabolic.

The proofs of both Theorem \ref{A2} and Theorem \ref{A3} are very much in
the same spirit of Theorem \ref{A5}. However, the technical details differ.
Again, we work with a barrier function $f.$ For Theorem \ref{A2}, the
difficulty here is that $f$ may not be proper and the Gauss-Bonnet formula
can not be applied directly to the level sets of $f.$ To remedy this, we
consider $u=f\,\psi $ instead, where $\psi $ is a smooth cut-off function on 
$M.$ While this guarantees that the positive level sets of $u$ are compact,
the price we pay is that the function $u$ is no longer harmonic. This
creates many new technical issues. For example, it becomes unclear how to
control the number of connected components of the level sets of $u.$ To get
around the issue, we consider only the component $L(t,\infty )$ of the super
level set $\{u>t\}$ with the fixed point $p\in L(t,\infty ).$ It turns out
that for each end $E$ of $M,$ the unbounded component of $E\setminus
L(t,\infty )$ has exactly one component of the boundary of $L(t,\infty ).$
This fact more or less suffices for our purpose, though there could be other
components of the boundary of $L(t,\infty ).$ Another issue is that the
Bochner formula for $u$ introduces many extra terms. Fortunately, those
terms can be controlled by a judicious choice of the cut-off function $\psi
. $ The choice is based on a result from \cite{BS}.

We refer to \cite{Na, X, Z} for some of the progress on the aforementioned
questions. In particular, it should be mentioned that Theorem \ref{A5} was
proved by Zhu \cite{Z} with a different approach under the additional
assumption that all unit balls of $M$ have a fixed amount of volume.

The structure of the paper is as follows. In Section \ref{sect2}, we present
a proof of Theorem \ref{A5}. Although more general results are proved later
on, we choose to do so as the proof is much less complicated and seems to
better illuminate the main ideas. Section \ref{sect3'} is devoted to the
proof of Theorem \ref{A2} and Section \ref{sect4} to Theorem \ref{A3}.

\textbf{Acknowledgment:} We wish to thank Otis Chodosh for his interest in
our work. We would also like to thank Florian Johne for his careful reading
of the previous version and for his helpful comments. The first author was
partially supported by NSF grant DMS-1811845.

\section{Proof of Theorem \ref{A5} \label{sect2}}

We first establish a localized
version of Theorem \ref{A4}, see Lemma \ref{M}, which moreover does not require curvature bounds. 

 Recall that an end $E$ with respect to a smooth
connected bounded domain $D$ is simply an unbounded component of $M\setminus
D$. According to \cite{LT1}, each end $E$ of a complete manifold carries a
positive harmonic function $u$, which 
is either bounded and satisfies 
\begin{equation}
u=1\text{ on }\partial E\text{, with }\liminf_{x\rightarrow \infty }u(x)=0,
\label{n}
\end{equation}
or 
\begin{equation}
u=0\text{ on }\partial E\text{ with }\limsup_{x\rightarrow \infty
}u(x)=\infty .  \label{p}
\end{equation}
In the former case, $E$ is called nonparabolic and \ in the the latter case,
parabolic. 
\begin{definition}\label{barrier_defn}
The positive harmonic function satisfying either (\ref{n}) or (\ref{p}) is called a barrier 
function of the end $E$. 
\end{definition}
The barrier function in the nonparabolic case has finite energy $%
\int_{M}\left\vert \nabla u\right\vert ^{2}<\infty $. A result of Nakai \cite%
{N} further implies that $u$ can be chosen to be proper whenever $E$ is
parabolic. Let us also note that in both cases, $$\int_{\partial E}\frac{
\partial u}{\partial \nu }\neq 0.$$

In the following, when $\left( M,g\right) $ has finitely many ends and
finite first Betti number, the domain $D$ is assumed to be sufficiently
large such that all representatives of $H_{1}\left( M\right) $ lie in $D$
and $M\setminus D$ has the maximal number of ends. For a harmonic function $%
u $ on an end $E,$ denote by
\begin{eqnarray*}
L\left( a, b\right) &=&\left\{ x\in E:a<u\left( x\right) <b\right\}\\
l\left( t\right) &=&\left\{ x\in E:u\left( x\right) =t\right\}.
\end{eqnarray*}
The following result is essentially contained in \cite{MW}. See also Lemma 2.3 in \cite{LT2}.

\begin{lemma}
\label{MV} Let $\left( M^n,g\right) $ be a complete $n$-dimensional manifold with finite first
Betti number and finite number of ends. For a proper harmonic function $u$
on an end $E,$ its level set $l\left( t\right)$ is connected for all $t.$
\end{lemma}

We now derive a refined and localized version of Theorem \ref{A4}. For any
regular value $t$ of $u$, let
\begin{equation}
w\left( t\right) =\int_{l\left( t\right) }\left\vert \nabla u\right\vert
^{2}.  \label{a3}
\end{equation}
For $\alpha \in \mathbb{R},$ define
\begin{eqnarray}
H_{\alpha }\left( t\right) &=&t^{\alpha }\frac{dw}{dt}\left( t\right)
-\left( \alpha +3\right) t^{\alpha -1}w\left( t\right) +\frac{4\pi }{\alpha
+1}t^{\alpha +1}  \label{a4} \\
&=&t^{\alpha }\int_{l\left( t\right) }\frac{\left\langle \nabla \left\vert
\nabla u\right\vert ,\nabla u\right\rangle }{\left\vert \nabla u\right\vert }%
-\left( \alpha +3\right) t^{\alpha -1}\int_{l\left( t\right) }\left\vert
\nabla u\right\vert ^{2}  \notag \\
&&+\frac{4\pi }{\alpha +1}t^{\alpha +1},  \notag
\end{eqnarray}%
where the last term should be replaced by $4\pi \ln t$ when $\alpha =-1.$

\begin{lemma}
\label{M}Let $\left( M^3,g\right) $ be a complete three-dimensional manifold
with finite first Betti number and finite number of ends. Assume that $u$ is a proper harmonic function 
on an end $E$ of $M$. Then%
\begin{equation*}
H_{\alpha }\left( T\right) \geq H_{\alpha }\left( s\right) -
\alpha \left( \alpha+2\right) \int_{s}^{T}w\left( t\right) t^{\alpha -2}dt+\frac{1}{2}%
\int_{L(s,T)}u^{\alpha }S|\nabla u|
\end{equation*}%
for all $s<T$. 
\end{lemma}

\begin{proof}
For regular values $s<T$ we have%
\begin{eqnarray*}
&&\left( T^{\alpha }\frac{dw}{dT}\left( T\right) -\alpha T^{\alpha
-1}w\left( T\right) \right) -\left( s^{\alpha }\frac{dw}{ds}(s)-\alpha
s^{\alpha -1}w\left( s\right) \right) \\
&=&\left( \int_{l\left( T\right) }u^{\alpha }\frac{\left\langle \nabla
\left\vert \nabla u\right\vert ,\nabla u\right\rangle }{\left\vert \nabla
u\right\vert }-\int_{l\left( s\right) }u^{\alpha }\frac{\left\langle \nabla
\left\vert \nabla u\right\vert ,\nabla u\right\rangle }{\left\vert \nabla
u\right\vert }\right) \\
&&-\left( \int_{l\left( T\right) }\left\vert \nabla u\right\vert \frac{%
\left\langle \nabla u^{\alpha },\nabla u\right\rangle }{\left\vert \nabla
u\right\vert }-\int_{l\left( s\right) }\left\vert \nabla u\right\vert \frac{%
\left\langle \nabla u^{\alpha },\nabla u\right\rangle }{\left\vert \nabla
u\right\vert }\right) \\
&=&\int_{L\left( s,T\right) }\left( u^{\alpha }\Delta \left\vert \nabla
u\right\vert -\left\vert \nabla u\right\vert \Delta u^{\alpha }\right) \\
&=&\int_{L\left( s,T\right) }\frac{u^{\alpha }}{\left\vert \nabla
u\right\vert }\left( \left\vert u_{ij}\right\vert ^{2}-\left\vert \nabla
\left\vert \nabla u\right\vert \right\vert ^{2}+\mathrm{Ric}\left( \nabla
u,\nabla u\right) \right) \\
&&-\alpha \left( \alpha -1\right) \int_{L\left( s,T\right) }\left\vert
\nabla u\right\vert ^{3}u^{\alpha -2}.
\end{eqnarray*}%
Note that the term $\frac{1}{\left\vert \nabla u\right\vert }\left\vert 
\mathrm{Ric}\left( \nabla u,\nabla u\right) \right\vert \leq \left\vert 
\mathrm{Ric}\right\vert \left\vert \nabla u\right\vert $ is integrable even
if $u$ has critical points in $L(s,T).$ The same can be concluded for $\frac{%
1}{\left\vert \nabla u\right\vert }\left( \left\vert u_{ij}\right\vert
^{2}-\left\vert \nabla \left\vert \nabla u\right\vert \right\vert
^{2}\right) $ via a classical regularization procedure \cite{LY1} from the
above identity by noticing that it is nonnegative due to the Kato inequality.

Using the co-area formula to rewrite the last term%
\begin{equation*}
\int_{L\left( s,T\right) }\left\vert \nabla u\right\vert ^{3}u^{\alpha
-2}=\int_{s}^{T}t^{\alpha -2}w\left( t\right) dt,
\end{equation*}%
we conclude%
\begin{eqnarray}
&&\left( T^{\alpha }\frac{dw}{dT}\left( T\right) -\alpha T^{\alpha
-1}w\left( T\right) \right) -\left( s^{\alpha }\frac{dw}{ds}(s)-\alpha
s^{\alpha -1}w\left( s\right) \right)  \label{a5} \\
&=&\int_{L\left( s,T\right) }\frac{u^{\alpha }}{\left\vert \nabla
u\right\vert }\left( \left\vert u_{ij}\right\vert ^{2}-\left\vert \nabla
\left\vert \nabla u\right\vert \right\vert ^{2}+\mathrm{Ric}\left( \nabla
u,\nabla u\right) \right)  \notag \\
&&-\alpha \left( \alpha -1\right) \int_{s}^{T}t^{\alpha -2}w\left( t\right)
dt.  \notag
\end{eqnarray}%
On the other hand, by (\ref{a1}),%
\begin{eqnarray}
&&\int_{l\left( t\right) }\frac{1}{\left\vert \nabla u\right\vert ^{2}}%
\left( \left\vert u_{ij}\right\vert ^{2}-\left\vert \nabla \left\vert \nabla
u\right\vert \right\vert ^{2}+\mathrm{Ric}\left( \nabla u,\nabla u\right)
\right)  \label{a7} \\
&=&\frac{1}{2}\int_{l\left( t\right) }\left( S-S_{t}+\frac{1}{\left\vert
\nabla u\right\vert ^{2}}\left\vert u_{ij}\right\vert ^{2}\right)  \notag
\end{eqnarray}%
on any regular level set $l\left( t\right) ,$ where $S_{t}$ is the scalar
curvature of $l\left( t\right) .$ Since by Lemma \ref{MV} the level set $%
l\left( t\right) $ is connected for all $t\geq 1,$ the Gauss-Bonnet theorem
and the Kato inequality imply that%
\begin{equation}
\frac{1}{2}\int_{l\left( t\right) }\left( S-S_{t}+\frac{1}{\left\vert \nabla
u\right\vert ^{2}}\left\vert u_{ij}\right\vert ^{2}\right) \geq \frac{1}{2}%
\int_{l(t)}S-4\pi +\frac{3}{4}\int_{l\left( t\right) }\frac{1}{\left\vert
\nabla u\right\vert ^{2}}\left\vert \nabla \left\vert \nabla u\right\vert
\right\vert ^{2}.  \label{a8}
\end{equation}%
Observe that 
\begin{equation*}
\left\vert w^{\prime }\left( t\right) \right\vert \leq \int_{l\left(
t\right) }\left\vert \nabla \left\vert \nabla u\right\vert \right\vert \leq
\left( \int_{l\left( t\right) }\frac{1}{\left\vert \nabla u\right\vert ^{2}}%
\left\vert \nabla \left\vert \nabla u\right\vert \right\vert ^{2}\right) ^{%
\frac{1}{2}}\left( \int_{l\left( t\right) }\left\vert \nabla u\right\vert
^{2}\right) ^{\frac{1}{2}},
\end{equation*}%
which implies%
\begin{equation*}
\int_{l\left( t\right) }\frac{1}{\left\vert \nabla u\right\vert ^{2}}%
\left\vert \nabla \left\vert \nabla u\right\vert \right\vert ^{2}\geq \frac{%
\left( w^{\prime }\right) ^{2}\left( t\right) }{w\left( t\right) }.
\end{equation*}%
Together with the elementary inequality%
\begin{equation*}
\frac{\left( w^{\prime }\right) ^{2}}{w}\left( t\right) \geq \frac{4}{t}%
w^{\prime }\left( t\right) -\frac{4}{t^{2}}w\left( t\right) ,
\end{equation*}%
one concludes from (\ref{a8}) that%
\begin{equation*}
\frac{1}{2}\int_{l\left( t\right) }\left( S-S_{t}+\frac{1}{\left\vert \nabla
u\right\vert ^{2}}\left\vert u_{ij}\right\vert ^{2}\right) \geq \frac{1}{2}%
\int_{l(t)}S-4\pi +\frac{3}{t}w^{\prime }\left( t\right) -\frac{3}{t^{2}}%
w\left( t\right) .
\end{equation*}%
Hence, by (\ref{a7}) we get 
\begin{eqnarray*}
&&\int_{l\left( t\right) }\frac{1}{\left\vert \nabla u\right\vert ^{2}}%
\left( \left\vert u_{ij}\right\vert ^{2}-\left\vert \nabla \left\vert \nabla
u\right\vert \right\vert ^{2}+\mathrm{Ric}\left( \nabla u,\nabla u\right)
\right) \\
&\geq &\frac{1}{2}\int_{l(t)}S-4\pi +\frac{3}{t}w^{\prime }\left( t\right) -%
\frac{3}{t^{2}}w\left( t\right)
\end{eqnarray*}%
for any regular level set $l\left( t\right) .$ We now use it and the co-area
formula to conclude%
\begin{eqnarray*}
&&\int_{L\left( s,T\right) }\frac{u^{\alpha }}{\left\vert \nabla
u\right\vert }\left( \left\vert u_{ij}\right\vert ^{2}-\left\vert \nabla
\left\vert \nabla u\right\vert \right\vert ^{2}+\mathrm{Ric}\left( \nabla
u,\nabla u\right) \right) \\
&\geq &\frac{1}{2}\int_{L(s,T)}u^{\alpha }S|\nabla u|+\int_{s}^{T}\left(
-4\pi +\frac{3}{t}w^{\prime }\left( t\right) -\frac{3}{t^{2}}w\left(
t\right) \right) t^{\alpha }dt.
\end{eqnarray*}%
Integrating by parts and noting that $w$ is Lipschitz, we obtain%
\begin{eqnarray*}
&&\int_{L\left( s,T\right) }\frac{u^{\alpha }}{\left\vert \nabla
u\right\vert }\left( \left\vert u_{ij}\right\vert ^{2}-\left\vert \nabla
\left\vert \nabla u\right\vert \right\vert ^{2}+\mathrm{Ric}\left( \nabla
u,\nabla u\right) \right) \\
&\geq &\frac{1}{2}\int_{L(s,T)}u^{\alpha }S|\nabla u|-\frac{4\pi }{\alpha +1}%
\left( T^{\alpha +1}-s^{\alpha +1}\right) \\
&+&3T^{\alpha -1}w\left( T\right) -3s^{\alpha -1}w\left( s\right) -3\alpha
\int_{s}^{T}t^{\alpha -2}w\left( t\right) dt.
\end{eqnarray*}%
Plugging this into (\ref{a5}) yields%
\begin{eqnarray*}
&&\left( T^{\alpha }\frac{dw}{dT}\left( T\right) -\alpha T^{\alpha
-1}w\left( T\right) \right) -\left( s^{\alpha }\frac{dw}{ds}(s)-\alpha
s^{\alpha -1}w\left( s\right) \right) \\
&\geq &\frac{1}{2}\int_{L(s,T)}u^{\alpha }S|\nabla u|-\frac{4\pi }{\alpha +1}%
\left( T^{\alpha +1}-s^{\alpha +1}\right) +3T^{\alpha -1}w\left( T\right)
-3s^{\alpha -1}w\left( s\right) \\
&&-\alpha \left( \alpha +2\right) \int_{s}^{T}t^{\alpha -2}w\left( t\right)
dt
\end{eqnarray*}%
for any $s<T.$ This proves the result.
\end{proof}

We are now ready to prove Theorem \ref{A5}.  We may assume that the
number of ends is one as otherwise $M$ must be a cylinder by the splitting
theorem \cite{CG}. Note also that for a complete
manifold $M$ with non-negative Ricci curvature, its first Betti number is
always bounded by its dimension \cite{And, Mi}.

Hence, without loss of generality, we may assume that all representatives of $H_1 (M)$ lie in some domain 
$D$, and $D\subset B_{p}(r_0)$ for
some $r_0$ large enough. We let $u$ denote the barrier function on $E=M\setminus D$, satisfying either (\ref{n}) or (\ref{p}).

If $M$ is nonparabolic, then the barrier function must be
proper as the minimal positive Green's function goes to $0$ at infinity 
\cite{LY}. 
Consequently, Lemma \ref{M} is applicable to the barrier
function $u$ of $M\setminus D.$

\begin{lemma}
\label{P} Let $(M^3,g)$ be a complete three-dimensional manifold with
non-negative Ricci curvature. If its scalar curvature is bounded below by $%
S(x)\geq \frac{c}{r(x)+1}$ on $M,$ then $M$ does not admit any positive
Green's function, i.e., $M$ is parabolic.
\end{lemma}

\begin{proof}
Suppose otherwise, that $M$ is nonparabolic. Apply Lemma \ref{M} to the barrier function $u$ of 
$M\setminus D$ with $\alpha =0.$ Then%
\begin{equation*}
H_{0}\left( T\right) \geq H_{0}\left( t\right) +\frac{1}{2}%
\int_{L(t,T)}\left\vert \nabla u\right\vert S
\end{equation*}%
for all $0<t<T\leq 1,$ where 
\begin{equation*}
H_{0}\left( t\right) =w^{\prime }\left( t\right) -3\frac{w\left( t\right) }{t%
}+4\pi t.
\end{equation*}
Note that $w\left( t\right) \leq Ct^{2}$ by the gradient estimate in \cite%
{CY0}, hence 
\begin{equation*}
\liminf_{t\rightarrow 0}H_{0}(t)=0.
\end{equation*}
Therefore, there exists a constant  $C>0$ so that
\begin{equation*}
\int_{M\setminus D}\left\vert \nabla u\right\vert
S=\int_{L(0,1)}\left\vert \nabla u\right\vert S\leq C.
\end{equation*}%
On the other hand, for any $R>r_0$,%
\begin{eqnarray*}
\int_{B_{p}\left( R\right) \setminus B_{p}(r_0)}\left\vert \nabla u\right\vert
S &=&\int_{r_0}^{R}\left( \int_{\partial B_{p}\left( r\right) }\left\vert
\nabla u\right\vert S\right) dr \\
&\geq &\int_{r_0}^{R}\frac{c}{r+1}\left( \int_{\partial B_{p}\left( r\right)
}\left\vert \nabla u\right\vert \right) dr \\
&\geq &\frac{1}{C}\ln R,
\end{eqnarray*}%
for some $C>0$, where we have used the fact that 
\begin{eqnarray*}
\int_{\partial B_{p}(r)}\left\vert \nabla u\right\vert &\geq& \left\vert
\int_{\partial B_{p}(r)}\frac{\partial u}{\partial r}\right\vert\\
&=& \left \vert\int_{\partial D}  \frac{\partial u}{\partial \nu}\right\vert>0.\\
\end{eqnarray*}
This contradiction completes the proof.
\end{proof}

The following corollary immediately follows from \cite{LY}.

\begin{corollary}
Let $(M^3,g)$ be a complete three-dimensional manifold with non-negative Ricci
curvature. If its scalar curvature is bounded below by 
$S(x)\geq \frac{c}{r(x)+1}$ on $M,$ then $\int_{1}^{\infty }\frac{t}{\mathrm{V}_{p}(t)}%
\,dt=\infty .$
\end{corollary}

For the proof of Theorem \ref{A5} we also need the following lemma, which is true in any dimension.  

\begin{lemma}
\label{Gr} Let $\left( M^{n},g\right) $ be a complete $n$-dimensional manifold with
non-negative Ricci curvature and $u$ a harmonic function on $M\setminus D$ such that 
\begin{equation*}
u=0 \text{ on }\partial D,\ \ \lim_{x\rightarrow \infty
}u(x)=\infty, \text{ and }\int_{\partial D}\frac{\partial u}{\partial \nu}=1,
\end{equation*}%
where $\nu$ denotes the outer normal to $D$. If 
\begin{equation*}
\mathrm{V}_{p}(R)\geq \beta \,R^{\alpha +1}\text{ \ for all \ }R\geq R_{0},
\end{equation*}%
where $\alpha \geq 0,$ $\beta >0$ and $R_{0}>0$ are all constants, then 
\begin{equation*}
\limsup_{x\rightarrow \infty }\left( \left\vert \nabla u\right\vert
\,r^{\alpha }\right) (x)\leq \frac{C}{\beta },
\end{equation*}%
for some constant $C$ depending only on dimension.
\end{lemma}

\begin{proof} Let $r_0>0$ be large enough so that $D\subset B_p(r_0)$. For $R\geq r_0,$ let 
\begin{equation*}
F(R)=\max_{x\in \partial B_{p}(R)}\left\vert \nabla u\right\vert (x).
\end{equation*}
We assert that there exists $C>0$ such that for any $r_0\leq R<T$,
\begin{equation}
\left\vert \nabla u\right\vert (x)-F(R)-\frac{C}{T}u(x)\leq 0
\label{a9}
\end{equation}%
on $B_{p}(T)\setminus B_{p}(R).$
Indeed, the function on the left hand side is subharmonic as
$\left\vert \nabla u\right\vert $ is subharmonic and $u$ is harmonic. Also,
it is nonpositive on the boundary of $B_{p}(T)\setminus B_{p}(R)$
by the gradient estimate of Cheng and Yau \cite{CY0}, which says that $\left\vert \nabla
u\right\vert (x)\leq \frac{C}{r\left( x\right) }u\left( x\right).$ So, (\ref{a9}) immediately follows from
the maximum principle. 

By first letting $T\rightarrow \infty $ and then maximizing
over all $x\in M\setminus B_p(R),$ one 
concludes from (\ref{a9}) that 
\begin{equation}
F(R)=\max_{x\in M\setminus B_{p}(R)}\left\vert \nabla u\right\vert (x)
\label{a10}
\end{equation}%
for any $R>r_0.$ In particular, $F(R)$ is non-increasing in $R.$ Let%
\begin{equation}\label{Gamma}
\Gamma =\limsup_{R\rightarrow \infty }\left( R^{\alpha }\,F(R)\right) .
\end{equation}
It suffices to show that $$\Gamma \leq \frac{C}{\beta },$$ for $C$ depending only on dimension. 

For $x\in \partial B_{p}\left( R\right) $ with $R\geq 2r_0,$ by the Bishop-Gromov
volume comparison theorem,%
\begin{equation*}
\frac{\mathrm{V}_{p}\left( R\right) }{\mathrm{V}_{x}\left( \frac{R}{2}%
\right) }\leq \frac{\mathrm{V}_{x}\left( 2R\right) }{\mathrm{V}_{x}\left( 
\frac{R}{2}\right) }\leq C.
\end{equation*}%
Therefore, as $\mathrm{V}_{p}(R)\geq \beta \,R^{\alpha +1}$ for $R\geq R_0$, it follows that
\begin{equation*}
{\mathrm{V}_{x}\left( \frac{R}{2}\right) }\geq \frac{\beta}{C} \,R^{\alpha +1}.
\end{equation*}%
We now follow an argument in \cite{NR}. Let
\begin{equation*}
a_x=\min_{B_{x}(\frac{R}{2})}u\,\ \text{and }b_x=\max_{B_{x}(\frac{R}{2})}u.
\end{equation*}%
Since $u$ is harmonic and $\int_{\partial D}\frac{\partial u}{\partial \nu}=1$, 
we have $\int_{l\left( t\right) }\left\vert \nabla
u\right\vert =1$. The co-area formula implies that 
\begin{equation*}
\int_{L\left( a,b\right) }\left\vert \nabla u\right\vert
^{2}=\int_{a_x}^{b_x}\int_{l\left( t\right) }\left\vert \nabla u\right\vert
=b_x-a_x\leq F\left( \frac{R}{2}\right) R,
\end{equation*}%
where the last inequality follows from (\ref{a10}). Applying the mean value
inequality to the subharmonic function $\left\vert \nabla u\right\vert ^{2}$
(see Theorem 7.2 in \cite{L}) we arrive at%
\begin{eqnarray*}
\left\vert \nabla u\right\vert ^{2}(x) &\leq &\frac{C}{\mathrm{V}_{x}\left( 
\frac{R}{2}\right) }\int_{B_{x}(\frac{R}{2})}\left\vert \nabla u\right\vert
^{2} \\
&\leq &\frac{C}{\beta R^{\alpha +1}}\int_{L(a_x,b_x)}\left\vert \nabla
u\right\vert ^{2} \\
&\leq &\frac{C}{\beta R^{\alpha }}F\left (\frac{R}{2}\right).
\end{eqnarray*}%
As $x\in \partial B_{p}\left( R\right) $ is arbitrary, this implies that $%
F^{2}(R)\leq \frac{C}{\beta R^{\alpha }}F\left(\frac{R}{2}\right )$. Rewrite it into
\begin{equation}
W^{2}(R)\leq \frac{C}{\beta }W\left(\frac{R}{2}\right),  \label{a11}
\end{equation}
where $W(R)=R^{\alpha }\,F(R)$. By a simple induction, we conclude $W(R)\leq
C(R_{0}).$ Hence, $\Gamma $ is finite. From (\ref{a11}), after letting $%
R\rightarrow \infty ,$ one obtains $\Gamma \leq \frac{C}{\beta }$ as desired.
By (\ref{Gamma}), this proves the lemma. 
\end{proof}
 For the reader's convenience, we restate Theorem \ref{A5} from the Introduction. 
\begin{theorem}
\label{A'5} Let $(M^3,g)$ be a complete three-dimensional manifold with
non-negative Ricci curvature.

\begin{itemize}
\item If the scalar curvature is bounded below by $S\geq 1$ on $M,$ then
there exists a universal constant $C>0$ such that 
\begin{equation}
\mathrm{V}_{p}\left( R\right) \leq C\,R  \label{part1}
\end{equation}%
for all $R>0$ and $p\in M.$

\item If the scalar curvature 
\begin{equation*}
S\left( x\right) \geq \frac{C_0}{r^{\alpha }\left( x\right) +1}
\end{equation*}%
for some $\alpha \in \left[ 0,1\right] ,$ where $r(x)$ is the geodesic
distance from $x$ to a fixed point $p\in M,$ then
\begin{equation}
\mathrm{V}_{p}\left( R_{i}\right) \leq \frac{C}{C_0}\,R_{i}^{\alpha +1}  \label{part2}
\end{equation}%
for a sequence $R_{i}\rightarrow \infty$, where $C>0$ is a universal constant.
\end{itemize}
\end{theorem}

\begin{proof}
By Lemma \ref{P}, $M$ must be parabolic, hence by (\ref{p}) it admits a
positive harmonic function $u$ on $M\setminus D$ such that $u=0$ on $\partial D$ and $
\lim_{x\rightarrow \infty }u(x)=\infty .$ Normalizing $u$ we have 
\begin{equation*}
u=0\text{ on }\partial D,\ \ \lim_{x\rightarrow \infty
}u(x)=\infty, \text{ and }\int_{\partial D}\frac{\partial u}{\partial \nu}=1.
\end{equation*}
According to \cite{Y2}, $\mathrm{V}_{p}(R)\geq \frac{1}{C}\,\mathrm{V}_{p}(1)\,R$, for
all $R\geq 1$, where $C>0$ is a universal constant. Applying Lemma \ref{Gr} for $\alpha =0$ and 
$\beta= \frac{\mathrm{V}_p (1)}{C}$,
it follows that 
\begin{equation}
\left\vert \nabla u\right\vert \leq \frac{C}{\mathrm{V}_{p}(1)}\text{ on }%
M\setminus B_{p}(r_{1})  \label{a12}
\end{equation}
for some $r_1>0$ sufficiently large. The monotonicity formula from Lemma \ref{M}
implies that 
\begin{equation}\label{S_Bound}
H_{0}\left( t\right) \geq H_{0}\left( 1\right) +\frac{1}{2}%
\int_{L(1,t)}\left\vert \nabla u\right\vert S
\end{equation}%
for all $t\geq 1,$ where 
\begin{equation*}
H_{0}\left( t\right) =w^{\prime }\left( t\right) -3\frac{w\left( t\right) }{t%
}+4\pi t
\end{equation*}%
and $w\left( t\right) =\int_{l\left( t\right) }\left\vert \nabla
u\right\vert ^{2}.$ 

Note that $w(t)$ is bounded, as $\int_{l\left( t\right) }\left\vert \nabla
u\right\vert =1$ and $\vert\nabla u\vert$ is bounded by (\ref{a12}). In
particular, there exists some constant  $C(r_1)>0$, depending on $\sup _{B_p (r_1)\setminus D} \vert \nabla u \vert$,  such that for each $t>1$ we have $w^{\prime
}(\xi )\leq C(r_1),$ for some $\xi \in \left( t,t+1\right) $. 

In conclusion, (\ref{S_Bound}) implies that for
all $t>1$,
\begin{equation}
\int_{L(1,t)}\left\vert \nabla u\right\vert S\leq \int_{L(1,\xi )}\left\vert
\nabla u\right\vert S\leq 8\pi \,t+C(r_1).  \label{a13}
\end{equation} 

To prove the first part (\ref{part1}), by scaling, it suffices to show there exists a universal constant $C>0$ so that  
\begin{equation}
\mathrm{V}_{p}(1)\leq \frac{C}{A}\text{ \ if \ }S\geq A>0.  \label{a14}
\end{equation}
According to (\ref{a12}), 
\begin{equation*}
u(x)\leq \frac{C}{\mathrm{V}_{p}(1)}\,r(x)+C(r_{1})
\end{equation*}
for all $x\in M\setminus B_{p}(r_{1})$. Hence, there exists a universal
constant $C>0$ so that 
\begin{equation*}
B_{p}(R)\setminus B_{p}(r_{1})\subset L\left( 1,\frac{C}{\mathrm{V}_{p}(1)}%
\,R+C(r_{1})\right)
\end{equation*}%
for all $R>r_1$, by additionally assuming that $r_{1}$ is large enough such that $u(x)\geq 1$ outside $B_{p}(r_{1}).$ From (%
\ref{a13}) and $S\geq A,$ we conclude
\begin{equation*}
A\,\int_{B_{p}(R)\setminus B_{p}(r_{1})}\left\vert \nabla u\right\vert \leq 
\frac{C}{\mathrm{V}_{p}(1)}\,R+C(r_{1})
\end{equation*}%
for all $R>r_{1}.$ However, 
\begin{equation*}
\int_{B_{p}(R)\setminus B_{p}(r_{1})}\left\vert \nabla u\right\vert \geq
\int_{r_{1}}^{R}\left( \,\int_{\partial B_{p}(r)}\frac{\partial u}{\partial r%
}\right) dr=R-r_{1}.
\end{equation*}%
Hence, $\mathrm{V}_{p}(1)\leq \frac{C}{A}$ after letting $R\rightarrow
\infty $. This proves (\ref{a14}).

Now we turn to the second part (\ref{part2}). 
Let us assume that  for some $\varepsilon>0$ we have 
\begin{equation} \label{a15}
\mathrm{V}_{p}\left( R\right) \geq \frac{1}{\varepsilon }\,R^{\alpha +1} , 
\end{equation}
 for all $R\geq R_{0}.$ Therefore, by Lemma \ref{Gr},%
\begin{equation}
\left\vert \nabla u\right\vert \leq C\varepsilon r^{-\alpha }\text{ \ on }%
M\setminus B_{p}(R_{1}),  \label{a16}
\end{equation}%
where $C$ is a universal constant and $R_{1}\geq R_{0}$ a fixed constant.
For $0\leq \alpha <1,$ integrating (\ref{a16}) along minimal geodesics we
obtain%
\begin{equation*}
u\leq \frac{C\,\varepsilon }{1-\alpha }r^{1-\alpha }+C(R_{1})\text{ \ on }%
M\setminus B_{p}(R_{1}).
\end{equation*}%
This implies that 
\begin{equation*}
B_{p}\left( R\right) \setminus B_{p}(R_{1})\subset L\left( 1,\frac{%
C\varepsilon }{1-\alpha }R^{1-\alpha }+C(R_{1})\right)
\end{equation*}%
by assuming additionally that $R_1$ is large enough so  such that $u(x)\geq 1$ outside $B_{p}(R_{1}).$
By (\ref{a13}), 
\begin{eqnarray}
\int_{B_{p}\left( R\right) \setminus B_{p}(R_{1})}S|\nabla u| &\leq
&\int_{L\left( 1,\frac{C\,\varepsilon }{1-\alpha }R^{1-\alpha
}+C(R_{1})\right) }S\left\vert \nabla u\right\vert  \label{a17} \\
&\leq &\frac{C\varepsilon }{1-\alpha }R^{1-\alpha }+C(R_{1}),  \notag
\end{eqnarray}%
where $C$ is a universal constant.  However, by the assumption that $S\geq
C_0\,r^{-\alpha }$ \ on $M\setminus B_{p}(R_1)$, we obtain from the co-area
formula that 
\begin{eqnarray}
\int_{B_{p}\left( R\right) \setminus B_{p}(R_{1})}S|\nabla u| &\geq
&\int_{R_{1}}^{R}\left( C_0\,r^{-\alpha }\right) \left( \int_{\partial
B_{p}\left( r\right) }\frac{\partial u}{\partial r}\right) dr  \label{a18} \\
&=&\frac{C_0}{1-\alpha }\left( R^{1-\alpha }-R_{1}^{1-\alpha }\right) .  \notag
\end{eqnarray}%
In conclusion, from (\ref{a17}) and (\ref{a18}),%
\begin{equation*}
\frac{C\,\varepsilon }{1-\alpha }R^{1-\alpha }+C(R_{1})\geq \frac{C_0}{%
1-\alpha }\left( R^{1-\alpha }-R_{1}^{1-\alpha }\right)
\end{equation*}%
for all $R>R_{1}$. Making $R\rightarrow \infty$ implies that 
$\varepsilon \geq \frac {C_0}{C}$. 
 Hence, (\ref{a15}) implies that
  \begin{equation*}
\mathrm{V}_{p}\left( R_{i}\right) \leq \frac{C}{C_0}\,R_{i}^{\alpha +1} , 
\end{equation*}
for a universal constant $C>0$. 
  A similar argument also works for $\alpha =1.$ This proves
the result.
\end{proof}

\section{Proof of Theorem \protect\ref{A2}\label{sect3'}}

In this section, we focus on the proof of Theorem \ref{A2}. First, we recall
a result of Bianchi and Setti (Theorem 2.1 in \cite{BS}) as the following lemma. 
While (\ref{b2}) is not explicitly stated by them, it can be easily verified from the construction.
Everywhere, $r(x)$ denotes the distance function to a fixed point $p\in M$. 
\begin{lemma} 
\label{eta}Let $\left( M^n,g\right) $ be an $n$-dimensional complete
Riemannian manifold with Ricci curvature bounded below by $\mathrm{Ric}\geq -%
\frac{C}{r^{2}+1}$. Then there exists a smooth proper function $\eta $ such
that%
\begin{eqnarray}
\frac{1}{C}\ln \left( r+2\right) &\leq &\eta \leq C\ln \left( r+2\right)
\label{b1} \\
\left\vert \nabla \eta \right\vert &\leq &\frac{C}{r+1}  \notag \\
\left\vert \Delta \eta \right\vert &\leq &\frac{C}{\left( r+1\right) ^{2}}\
\   \notag
\end{eqnarray}%
and 
\begin{equation}
\left\vert \nabla \left( \Delta \eta \right) \right\vert \leq \frac{C}{%
\left( r+1\right) ^{3}}+\frac{C}{r+1}\left\vert \nabla ^{2}\eta \right\vert
\label{b2}
\end{equation}%
for some constant $C>0.$
\end{lemma}

We henceforth denote with 
\begin{equation}
D\left( t\right) =\left\{ x\in M:\eta \left( x\right) <t\right\} .
\label{b3}
\end{equation}%
For given $R>0,$ let $\psi :\left( 0,\infty \right) \rightarrow \mathbb{R}$
be a smooth function such that $\psi =1$ on $\left( 0,\ln R\right) $ and $%
\psi =0$ on $\left( 2\ln R,\infty \right) ,$ and

\begin{equation*}
\left\vert \psi ^{\prime }\right\vert \leq \frac{C}{\ln R},\ \ \left\vert
\psi ^{\prime \prime }\right\vert \leq \frac{C}{\ln ^{2}R}\text{ and }%
\left\vert \psi ^{\prime \prime \prime }\right\vert \leq \frac{C}{\ln ^{3}R}.
\end{equation*}%
Composing it with $\eta ,$ we obtain a cut-off function $\psi \left(
x\right) =\psi \left( \eta \left( x\right) \right) $. Obviously,

\begin{equation*}
\psi =1\text{ on }D\left( \ln R\right) \text{ \ and }\psi =0\text{ on }%
M\setminus D\left( 2\ln R\right) .
\end{equation*}

In the following we assume that $\left( M,g\right) $ is nonparabolic and let 
$f$ be a barrier function on $M\setminus D$, see Definition \ref{barrier_defn}.  We extend $f$ to $M$ by setting 
$f=1$ on $D.$ The next result is well-known \cite{CY0}.

\begin{lemma}
\label{CY'} Let $\left( M^n,g\right) $ be an $n$-dimensional complete
noncompact Riemannian manifold with Ricci curvature bounded below by $%
\mathrm{Ric}\geq -\frac{C}{r^{2}+1}$. Then%
\begin{equation*}
\left\vert \nabla \ln f\right\vert \leq \frac{C}{r}\text{ \ on }M\setminus D.
\end{equation*}
\end{lemma}
We now consider the function $u$ given by 
\begin{equation}
u=f\psi .  \label{b4}
\end{equation}%
In view of Lemma \ref{eta}, it is straightforward to verify the following lemma.

\begin{lemma}
\label{u'}Let $\left( M^n,g\right) $ be an $n$-dimensional complete noncompact
Riemannian manifold with Ricci curvature bounded below by $\mathrm{Ric}\geq -%
\frac{C}{r^{2}+1}$. Then $u$ is harmonic on $D\left( \ln R\right) \setminus
D $ and on $M\setminus D,$%
\begin{eqnarray*}
\left\vert \nabla u\right\vert &\leq &u\,\left\vert \nabla \ln f\right\vert +%
\frac{C}{r}\frac{1}{\ln R} \\
\left\vert \Delta u\right\vert &\leq &\frac{C}{r^{2}\ln R}+\frac{C}{\ln R}%
\left\vert \nabla f\right\vert ^{2} \\
\left\vert \nabla \left( \Delta u\right) \right\vert &\leq &\frac{Cr}{\ln R}%
\left( \left\vert \nabla ^{2}\eta \right\vert ^{2}+\left\vert \nabla
^{2}f\right\vert ^{2}\right) +\frac{C}{r^{3}\ln R}+\frac{C}{\ln R}\left\vert
\nabla f\right\vert ^{2}.
\end{eqnarray*}
\end{lemma}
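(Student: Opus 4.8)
The plan is to mimic the proof of Lemma \ref{u} but with the sharper decay estimates now available. Since $u=f\psi$, the statement $\Delta u=0$ on $D(\ln R)\backslash D$ is immediate because $\psi\equiv 1$ there and $f$ is harmonic on $M\backslash D$. For the gradient bound, I would write $\nabla u=\psi\,\nabla f+f\,\nabla\psi$, so $\left\vert\nabla u\right\vert\leq\psi\left\vert\nabla f\right\vert+f\left\vert\nabla\psi\right\vert\leq u\left\vert\nabla\ln f\right\vert+f\left\vert\nabla\psi\right\vert$ wherever $\psi\neq 0$; since $\left\vert\nabla\psi\right\vert=\left\vert\psi'(\eta)\right\vert\left\vert\nabla\eta\right\vert\leq\frac{C}{\ln R}\cdot\frac{C}{r+1}$ by Lemma \ref{eta}, and $0<f\leq 1$, this gives $\left\vert\nabla u\right\vert\leq u\left\vert\nabla\ln f\right\vert+\frac{C}{r\ln R}$ on $M\backslash D$ (absorbing the $+1$ in $r+1$ harmlessly since the bound is only of interest for large $r$, or keeping $\frac{C}{r+1}$ throughout).

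Next, for the Laplacian, since $f$ is harmonic I compute $\Delta u=f\,\Delta\psi+2\langle\nabla f,\nabla\psi\rangle$. Here $\Delta\psi=\psi'(\eta)\Delta\eta+\psi''(\eta)\left\vert\nabla\eta\right\vert^2$, so by Lemma \ref{eta} one has $\left\vert\Delta\psi\right\vert\leq\frac{C}{\ln R}\cdot\frac{1}{(r+1)^2}+\frac{C}{\ln^2 R}\cdot\frac{1}{(r+1)^2}\leq\frac{C}{r^2\ln R}$, which controls the first term by $\frac{C}{r^2\ln R}$; for the cross term I would simply estimate $2\left\vert\langle\nabla f,\nabla\psi\rangle\right\vert\leq 2\left\vert\nabla f\right\vert\left\vert\nabla\psi\right\vert\leq\frac{C}{r\ln R}\left\vert\nabla f\right\vert$, and then apply Young's inequality $\frac{1}{r}\left\vert\nabla f\right\vert\leq\frac{1}{2r^2}+\frac{1}{2}\left\vert\nabla f\right\vert^2$ to land on $\left\vert\Delta u\right\vert\leq\frac{C}{r^2\ln R}+\frac{C}{\ln R}\left\vert\nabla f\right\vert^2$ on $M\backslash D$, as claimed.

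For the gradient of the Laplacian, I would first record, in parallel to (\ref{d5}), the estimate $\left\vert\nabla(\Delta\psi)\right\vert\leq\frac{C}{\ln R}\left(\frac{1}{(r+1)^3}+\frac{1}{r+1}\left\vert\nabla^2\eta\right\vert\right)$, obtained by differentiating $\Delta\psi=\psi'(\eta)\Delta\eta+\psi''(\eta)\left\vert\nabla\eta\right\vert^2$ and invoking (\ref{k1}) and (\ref{k2}): the terms that appear are $\left\vert\psi'\right\vert\left\vert\nabla(\Delta\eta)\right\vert$, $\left\vert\psi''\right\vert\left\vert\nabla\eta\right\vert\left\vert\Delta\eta\right\vert$, $\left\vert\psi''\right\vert\left\vert\nabla\left\vert\nabla\eta\right\vert^2\right\vert$, and $\left\vert\psi'''\right\vert\left\vert\nabla\eta\right\vert^3$, all bounded by the claimed right side after substituting the bounds on $\eta$ (with $\left\vert\nabla\left\vert\nabla\eta\right\vert^2\right\vert\leq 2\left\vert\nabla\eta\right\vert\left\vert\nabla^2\eta\right\vert$). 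Then, differentiating $\Delta u=f\,\Delta\psi+2\langle\nabla f,\nabla\psi\rangle$, I get $\left\vert\nabla(\Delta u)\right\vert\leq\left\vert\nabla f\right\vert\left\vert\Delta\psi\right\vert+f\left\vert\nabla(\Delta\psi)\right\vert+2\left\vert\nabla^2 f\right\vert\left\vert\nabla\psi\right\vert+2\left\vert\nabla f\right\vert\left\vert\nabla^2\psi\right\vert$, where $\left\vert\nabla^2\psi\right\vert\leq C(\left\vert\psi''\right\vert\left\vert\nabla\eta\right\vert^2+\left\vert\psi'\right\vert\left\vert\nabla^2\eta\right\vert)$. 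Bounding each piece using Lemma \ref{eta}, Young's inequality (to convert $\frac1r\left\vert\nabla f\right\vert$ into $\frac{1}{r^3}+r\left\vert\nabla f\right\vert^2$-type terms, and $\left\vert\nabla^2\eta\right\vert$ into $\left\vert\nabla^2\eta\right\vert^2+1$, etc.), and absorbing $\left\vert\nabla f\right\vert\left\vert\nabla^2\eta\right\vert\leq\left\vert\nabla f\right\vert^2+\left\vert\nabla^2\eta\right\vert^2$ and similarly for $\left\vert\nabla^2 f\right\vert$, collapses everything into $\frac{Cr}{\ln R}(\left\vert\nabla^2\eta\right\vert^2+\left\vert\nabla^2 f\right\vert^2)+\frac{C}{r^3\ln R}+\frac{C}{\ln R}\left\vert\nabla f\right\vert^2$.

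I do not anticipate a serious conceptual obstacle; this is a routine chain of product-rule expansions and Cauchy-Schwarz/Young estimates, directly analogous to Lemma \ref{u}. The only point requiring a little care is keeping track of the polynomial weights in $r$ so that the final bookkeeping matches the stated powers—in particular making sure the $\frac{1}{r}$ weight from $\left\vert\nabla\ln f\right\vert$ (Lemma \ref{CY'}) and the $\frac{1}{(r+1)}$, $\frac{1}{(r+1)^2}$, $\frac{1}{(r+1)^3}$ weights from $\eta$ (Lemma \ref{eta}) combine correctly after the Young's inequality splittings, and that the cross terms $\left\vert\nabla f\right\vert\left\vert\nabla^2\eta\right\vert$ and the like are split so the $\ln R$ factors end up where the statement puts them.
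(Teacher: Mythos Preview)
Your proposal is correct and follows essentially the same approach as the paper: the paper likewise bounds $\left\vert\nabla\psi\right\vert$, $\left\vert\Delta\psi\right\vert$, and $\left\vert\nabla(\Delta\psi)\right\vert$ from Lemma~\ref{eta}, expands $\nabla u$, $\Delta u=f\Delta\psi+2\langle\nabla f,\nabla\psi\rangle$, and $\nabla(\Delta u)$ by the product rule, and then uses Young's inequality in exactly the places you indicate to convert the cross terms into the stated form. The only cosmetic difference is that the paper first writes the intermediate bound $\left\vert\nabla(\Delta u)\right\vert\leq\frac{C}{r\ln R}\left(\left\vert\nabla^2\eta\right\vert+\left\vert\nabla^2 f\right\vert+\frac{1}{r^2}\right)+\frac{C}{r^2\ln R}\left\vert\nabla f\right\vert+\frac{C}{\ln R}\left\vert\nabla f\right\vert\left\vert\nabla^2\eta\right\vert$ before squaring, whereas you describe the squaring step more informally.
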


As mentioned before, in the case that $M$ has finite first Betti number and
finitely many, say $m, $ ends, $D$ is chosen to be large enough so that all
representatives of the first homology group $H_{1}\left( M\right) $ are
included in $D.$ Moreover, $M\setminus D$ has exactly $m$ unbounded
connected components. In this section, let $L\left( t,\infty \right) $
denote the connected component of the super-level set $\left\{ u>t\right\} $
that contains $D$ and
\begin{equation*}
l\left( t\right) =\partial L\left( t,\infty \right).
\end{equation*}

\begin{lemma}
\label{l'} Let $\left( M^n,g\right) $ be an $n$-dimensional complete
Riemannian manifold with $m$ ends and finite first Betti number $b_{1}\left(
M\right) $. Then for all $0<t<1,$%
\begin{equation*}
l\left( t\right) \cap \overline{M_{t}}=\partial M_{t}\text{ has }m\text{
connected components,}
\end{equation*}%
where $M_{t}$ is the union of all unbounded connected components of $%
M\setminus \overline{L\left( t,\infty \right) }.$
\end{lemma}

\begin{proof}
Recall that all representatives of the first homology $H_{1}\left( M\right) $
lie in $D$ and $M\setminus D$ has exactly $m$ unbounded components.
Therefore, as $D\subset L\left( t, \infty \right)$, it follows that  $L\left( t,\infty
\right) $ contains all representatives of $H_{1}\left( M\right)$ and $M_{t} $
has $m$ connected components for any $0<t<1.$ Note also $\partial
M_{t}\subset l\left( t\right).$

For fixed $0<\delta <1$ such that $t+\delta <1,$ define $U$ to be the union
of $L\left( t,\infty \right) $ with all bounded components of $M\setminus 
\overline{L\left( t+\delta ,\infty \right) }$ and $V$ the union of all
unbounded components of $M\setminus \overline{L\left( t+\delta ,\infty
\right) }.$ Since $M=U\cup V,$ we have the following Mayer-Vietoris sequence%
\begin{equation*}
H_{1}\left( U\right) \oplus H_{1}\left( V\right) \overset{j_{\ast }}{%
\rightarrow }H_{1}\left( M\right) \overset{\partial }{\rightarrow }%
H_{0}\left( U\cap V\right) \overset{i_{\ast }}{\rightarrow }H_{0}\left(
U\right) \oplus H_{0}\left( V\right) \overset{j_{\ast }^{\prime }}{%
\rightarrow }H_{0}(M).
\end{equation*}%
The map $j_{\ast }$ is onto because all representatives of $H_{1}\left(
M\right) $ lie inside $U$. The map $j_{\ast }^{\prime }$ is also onto. Note
also that $V$ has $m$ components and $U$ is connected. The latter is true
because each component of $M\setminus \overline{L\left( t+\delta ,\infty
\right) }$ intersects with $L\left( t,\infty \right) $ as $\overline{L\left(
t+\delta ,\infty \right) }\subset L\left( t,\infty \right) .$ We therefore
obtain the short exact sequence 
\begin{equation*}
0\rightarrow H_{0}\left( U\cap V\right) \rightarrow \mathbb{Z}\oplus
..\oplus \mathbb{Z\rightarrow Z\rightarrow }0
\end{equation*}%
with $m+1$ summands. In conclusion,%
\begin{equation*}
H_{0}\left( U\cap V\right) =\mathbb{Z}\oplus ..\oplus \mathbb{Z}
\end{equation*}%
with $m$ summands. Since $\delta >0$ can be arbitrarily small, this proves
that%
\begin{equation*}
l\left( t\right) \cap \overline{M_{t}}\text{ has }m\text{ components}
\end{equation*}%
for $0<t<1.$
\end{proof}

Recall that an $n$-dimensional manifold $M$ has asymptotically nonnegative
Ricci curvature if its Ricci curvature is bounded by

\begin{equation*}
\mathrm{Ric}(x)\geq -k\left( r(x)\right)
\end{equation*}%
for a continuous nonincreasing function $k:\left[ 0,\infty \right)
\rightarrow \left[ 0,\infty \right) $ with

\begin{equation*}
\int_{0}^{\infty }rk\left( r\right) dr<\infty .
\end{equation*}%
It is well known \cite{GW} that $\left( M^{n},g\right) $ has at most
Euclidean area growth, that is, the area $A_{p}(t)$ of geodesic sphere $%
\partial B_{p}(t)$ satisfies
\begin{equation}
\mathrm{A}_{p}\left( t\right) \leq C\,t^{n-1}  \label{b5}
\end{equation}%
for all $t>0.$ It is also clear that $\mathrm{Ric}\geq -\frac{C}{r^{2}+1}$.
In passing, we mention that Li and Tam \cite{LT1} have shown that $M^{n}$
has finitely many ends under the stronger assumption that $\int_{0}^{\infty
}r^{n-1}\,k\left( r\right) dr<\infty .$

We also note that according to Lemma \ref{eta}, 
\begin{equation}
D\left( 2\ln R\right) \subset B_{p}\left( R^{C}\right)  \label{b6}
\end{equation}%
for some $C>0$, where $D(t)$ was defined in (\ref{b3}).

We are now ready to prove Theorem \ref{A2} which is restated below.

\begin{theorem}
\label{T'} Let $\left( M^3,g\right) $ be a three-dimensional complete
noncompact Riemannian manifold with asymptotically nonnegative Ricci
curvature. Suppose that its scalar curvature is bounded below by a positive
constant and that $M$ has finitely many ends and finite first Betti number $%
b_{1}(M).$ Then $\left( M,g\right) $ is parabolic.
\end{theorem}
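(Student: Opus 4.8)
The plan is to argue by contradiction, closely following the scheme used for Theorem \ref{T}, with the positivity of the scalar curvature now playing the role of the constant $K$ there. So suppose $(M,g)$ is nonparabolic. First I would fix a smooth connected bounded domain $D$ large enough that all representatives of $H_{1}(M)$ lie in $D$, that $M\setminus D$ has exactly $m$ unbounded components, and --- invoking the hypothesis $\liminf_{x\to\infty}S(x)r(x)\ge C_{0}$ --- that $S(x)\,r(x)\ge\tfrac12 C_{0}$ for all $x\in M\setminus D$. Let $f$ be the Li--Tam barrier function on $M\setminus D$ from \eqref{f}, extended by $f=1$ on $D$; recall that $\int_{M\setminus D}|\nabla f|^{2}<\infty$, that $\int_{\{f=t\}}|\nabla f|=C_{1}>0$ for all $0<t\le 1$, and that $|\nabla\ln f|\le C/r$ by Lemma \ref{CY'}. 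For large $R$ set $u=f\psi$ as in \eqref{f4}, with $\psi=\psi(\eta)$ the cut-off built from the distance-like function $\eta$ of Lemma \ref{eta}, and let $\phi=\phi(u)$ be a cut-off equal to $1$ on $L(2\varepsilon,1)$ and $0$ outside $L(\varepsilon,\infty)$.

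Next I would integrate the Bochner inequality
\begin{equation*}
\Delta|\nabla u|\ge\big(|u_{ij}|^{2}-|\nabla|\nabla u||^{2}\big)|\nabla u|^{-1}+\mathrm{Ric}(\nabla u,\nabla u)|\nabla u|^{-1}-|\nabla(\Delta u)|
\end{equation*}
against $\phi^{2}$ over $M\setminus D$. On the left, integrating by parts and using $u=f$ near $\partial D$ should bound the resulting integral by a constant independent of $R$ and $\varepsilon$. For the curvature term I would apply Lemma \ref{RicS} on each regular level set together with the Gauss--Bonnet theorem: by Lemma \ref{l'}, $l(t)\cap\overline{M_{t}}$ consists of $m$ closed surfaces, so $\int_{l(t)\cap\overline{M_{t}}}S_{t}\le 8\pi m$, while $l(t)\cap(M\setminus\overline{M_{t}})\subset M\setminus D(\ln R)$ and the extra components $\widetilde l(t)$ contribute only lower-order errors, controlled with $\mathrm{Ric}\ge -C/r^{2}$ and an area estimate analogous to Lemma \ref{k}. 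On $D(\ln R)\setminus D$, where $u=f$ is harmonic by Lemma \ref{u'}, the improved Kato inequality of Lemma \ref{Kato} lets me absorb the two non-negative bracket terms from Lemma \ref{RicS}. The target inequality is then
\begin{equation*}
\tfrac34\int_{D(\ln R)\setminus D}|\nabla|\nabla u||^{2}|\nabla u|^{-1}\phi^{2}+\tfrac12\int_{D(\ln R)\setminus D}S\,|\nabla u|\,\phi^{2}\le C,
\end{equation*}
with $C$ independent of $R$ and $\varepsilon$; the second term is where positivity of $S$ enters, and by the choice of $D$ it dominates $\tfrac{C_{0}}{4}\int_{D(\ln R)\setminus D}\tfrac{|\nabla u|}{r}\,\phi^{2}$.

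The final step is a limiting argument. On $D(\tfrac12\ln R)$ one has $\psi\equiv 1$, hence $u=f$, and the same Mayer--Vietoris reasoning as in Lemma \ref{l'} shows $\{f>2\varepsilon\}\cap D(\tfrac12\ln R)\subset L(2\varepsilon,\infty)$, so $\phi\equiv 1$ there. Discarding the first term above, this gives $\tfrac{C_{0}}{4}\int_{\{f>2\varepsilon\}\cap D(\frac12\ln R)}\tfrac{|\nabla f|}{r}\le C$; letting $R\to\infty$ and then $\varepsilon\to 0$ and using monotone convergence yields $\int_{M\setminus D}\tfrac{|\nabla f|}{r}\le C$. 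On the other hand, the co-area formula and the flux identity $\int_{\{f=t\}}|\nabla f|=C_{1}$ give, for every $\delta\in(0,1)$,
\begin{equation*}
-C_{1}\ln\delta=\int_{\{\delta<f<1\}}\frac{|\nabla f|^{2}}{f}=\int_{\{\delta<f<1\}}|\nabla f|\,|\nabla\ln f|\le C\int_{M\setminus D}\frac{|\nabla f|}{r}\le C,
\end{equation*}
using $|\nabla\ln f|\le C/r$ from Lemma \ref{CY'}. Letting $\delta\to 0$ produces a contradiction, so $M$ must be parabolic.

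The hard part will be the uniform control, independent of $R$ and $\varepsilon$, of all the terms created by the failure of $u$ to be harmonic: the integrals of $|\nabla(\Delta u)|\,\phi^{2}$ and of $|\nabla u|\,|\Delta\phi^{2}|$, and the area of the extra components $\widetilde l(t)$. These are all supported in the annulus $D(2\ln R)\setminus D(\ln R)$ where $\psi$ transitions, and the point --- and the reason for building $\psi$ from the sharper distance-like function of Lemma \ref{eta}, which satisfies $|\nabla\psi|\le C/(r\ln R)$ and $|\Delta\psi|\le C/(r^{2}\ln R)$ --- is that these weights decay fast enough in $r$ that, after integrating the Bochner formulas for $\eta$ and for $f$ against a weight comparable to $r$ and invoking the Euclidean area growth \eqref{A} together with $\int_{M\setminus D}|\nabla f|^{2}<\infty$, one obtains bounds such as $\int_{M\setminus D}r\big(|\nabla^{2}\eta|^{2}+|\nabla^{2}f|^{2}\big)\phi^{2}\le C\ln R$, which render all these error contributions $O(1)$. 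This is the substitute here for the exponential $L^{2}$-decay of $f$ exploited in Theorem \ref{T}, and it is precisely where the assumption of asymptotically nonnegative Ricci curvature (rather than a mere constant lower bound) is needed.
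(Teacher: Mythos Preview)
Your proposal is correct and follows essentially the same approach as the paper's own proof: argue by contradiction, integrate the Bochner inequality for $u=f\psi$ against $\phi^{2}$, rewrite the Ricci term via Lemma~\ref{RicS} and Gauss--Bonnet on $l(t)\cap\overline{M_t}$, and control all non-harmonic error terms using the refined cut-off built from $\eta$ together with the Euclidean area growth~\eqref{A} and the Hessian bounds $\int r(|\nabla^2\eta|^2+|\nabla^2 f|^2)\phi^2\le C\ln R$. The only differences are cosmetic: the paper drops the Kato term rather than carrying it, handles $\widetilde l(t)$ directly via $\mathrm{Ric}\ge -C/r^2$ and $\int |\nabla u|/r^2\le C$ (no analogue of Lemma~\ref{k} is needed here), obtains the inclusion into $L(2\varepsilon,1)$ via the maximum principle on $B_p(r)$ rather than Mayer--Vietoris, and derives the final contradiction by integrating $S|\nabla f|$ over geodesic annuli using the constant flux $\int_{\partial B_p(r)}\partial_r f=-C_1$ instead of your (equally valid) use of the level sets of $f$ together with $|\nabla\ln f|\le C/r$.
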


\begin{proof}
Assume by contradiction that $\left( M,g\right) $ is nonparabolic. Let $f$
be a barrier function on $M\setminus D$, see (\ref{n}), where the smooth
connected compact domain $D$ contains all representatives of $H_{1}\left(
M\right) $ and $M\setminus D$ has $m$ ends.


For given small $0<\varepsilon <1,$ consider $R$ large so that%
\begin{equation}
\ln R>\frac{1}{\varepsilon ^{2}}.  \label{b8}
\end{equation}%
In the following, the constant $C>0$ is independent of $R$ and $\varepsilon $,
but its value may change from line to line.

Define the function 
\begin{equation*}
u=f\psi
\end{equation*}%
as in (\ref{b4}). By the Bochner formula and the inequality $%
\left\langle \nabla \Delta u,\nabla u\right\rangle \geq -\left\vert \nabla
\left( \Delta u\right) \right\vert \left\vert \nabla u\right\vert $ it
follows that%
\begin{equation*}
\Delta \left\vert \nabla u\right\vert \geq \left( \left\vert
u_{ij}\right\vert ^{2}-\left\vert \nabla \left\vert \nabla u\right\vert
\right\vert ^{2}\right) \left\vert \nabla u\right\vert ^{-1}+\mathrm{Ric}%
\left( \nabla u,\nabla u\right) \left\vert \nabla u\right\vert
^{-1}-\left\vert \nabla \left( \Delta u\right) \right\vert
\end{equation*}%
holds on $M\setminus D$ whenever $\left\vert \nabla u\right\vert \neq 0.$

Let $\phi \left( x\right) $ be the smooth cut-off function given by%
\begin{equation*}
\phi (x)=\left\{ 
\begin{array}{c}
\phi \left( u\left( x\right) \right) \\ 
0%
\end{array}%
\right. 
\begin{array}{c}
\text{on }L\left( \varepsilon ,\infty \right) \\ 
\text{on }M\setminus L\left( \varepsilon ,\infty \right)%
\end{array}%
\end{equation*}%
Here, the function $\phi \left( t\right) $ is smooth, $\phi \left( t\right)
=1$ for $2\varepsilon \leq t\leq 1$ and $\phi \left( t\right) =0$ for $%
t<\varepsilon .$ Moreover,%
\begin{equation}
\left\vert \phi ^{\prime }\right\vert \left( t\right) \leq \frac{C}{%
\varepsilon }\text{ \ and }\left\vert \phi ^{\prime \prime }\right\vert
\left( t\right) \leq \frac{C}{\varepsilon ^{2}}\text{ \ for }\varepsilon
<t<2\varepsilon .  \label{b9}
\end{equation}
Clearly, the function $\phi \left( x\right) $ satisfies $\phi =1$ on $%
L\left( 2\varepsilon ,\infty \right) $ and $\phi =0$ on $M\setminus L\left(
\varepsilon ,\infty \right) .$ It follows that%
\begin{eqnarray}
&&\int_{M\setminus D}\left( \Delta \left\vert \nabla u\right\vert \right)
\phi ^{2}  \label{b10} \\
&\geq &\int_{M\setminus D}\left( \left\vert u_{ij}\right\vert
^{2}-\left\vert \nabla \left\vert \nabla u\right\vert \right\vert ^{2}+%
\mathrm{Ric}\left( \nabla u,\nabla u\right) \right) \left\vert \nabla
u\right\vert ^{-1}\phi ^{2}  \notag \\
&&-\int_{M\setminus D}\left\vert \nabla \left( \Delta u\right) \right\vert
\phi ^{2}.  \notag
\end{eqnarray}
To estimate the last term, we use Lemma \ref{u'} to obtain%
\begin{eqnarray}
\int_{M\setminus D}\left\vert \nabla \left( \Delta u\right) \right\vert \phi
^{2} &\leq &\frac{C}{\ln R}\int_{M\setminus D}\left\vert \nabla ^{2}\eta
\right\vert ^{2}r\phi ^{2}+\frac{C}{\ln R}\int_{M\setminus D}\left\vert
\nabla ^{2}f\right\vert ^{2}r\phi ^{2}  \label{b11} \\
&&+\frac{C}{\ln R}\int_{D\left( 2\ln R\right) \setminus D}\frac{1}{r^{3}}+%
\frac{C}{\ln R}\int_{M\setminus D}\left\vert \nabla f\right\vert ^{2}. 
\notag
\end{eqnarray}
By \cite{LT1}, $\int_{M}\left\vert \nabla f\right\vert ^{2}<\infty .$ By (%
\ref{b5}) and (\ref{b6}), 
\begin{equation}
\frac{C}{\ln R}\int_{D\left( 2\ln R\right) \setminus D}\frac{1}{r^{3}}\leq C.
\label{b12}
\end{equation}
Now we deal with the first term in (\ref{b11}). For convenience, we extend $%
\phi $ everywhere on $M$ by setting $\phi (x)=1$ for $x\in D$. Integrating
by parts gives 
\begin{eqnarray}
\int_{M}\left\vert \nabla ^{2}\eta \right\vert ^{2}\left( r+1\right) \phi
^{2} &=&-\int_{M}\left\langle \nabla \left( \Delta \eta \right) ,\nabla \eta
\right\rangle \left( r+1\right) \phi ^{2}  \label{b13} \\
&&-\int_{M}\mathrm{Ric}\left( \nabla \eta ,\nabla \eta \right) \left(
r+1\right) \phi ^{2}  \notag \\
&&-\int_{M}\eta _{ij}\eta _{i}r_{j}\phi ^{2}  \notag \\
&&-2\int_{M}\eta _{ij}\eta _{i}\phi _{j}(r+1)\phi .  \notag
\end{eqnarray}%
By Lemma \ref{eta} it follows that
\begin{eqnarray*}
-\int_{M}\left\langle \nabla \left( \Delta \eta \right) ,\nabla \eta
\right\rangle \left( r+1\right) \phi ^{2} &\leq &C\int_{M}\frac{1}{r+1}%
\left\vert \nabla ^{2}\eta \right\vert \phi ^{2}+C\int_{D\left( 2\ln
R\right) }\frac{1}{\left( r+1\right) ^{3}} \\
&\leq &\frac{1}{4}\int_{M}\left( r+1\right) \left\vert \nabla ^{2}\eta
\right\vert ^{2}\phi ^{2}+C\int_{D\left( 2\ln R\right) }\frac{1}{\left(
r+1\right) ^{3}} \\
&\leq &\frac{1}{4}\int_{M}\left( r+1\right) \left\vert \nabla ^{2}\eta
\right\vert ^{2}\phi ^{2}+C\ln R,
\end{eqnarray*}%
where in the last line we have used (\ref{b12}). According to the Ricci
lower bound, we have%
\begin{eqnarray*}
-\int_{M}\mathrm{Ric}\left( \nabla \eta ,\nabla \eta \right) \left(
r+1\right) \phi ^{2} &\leq &C\int_{M}\frac{1}{r+1}\left\vert \nabla \eta
\right\vert ^{2} \\
&\leq &C\int_{D\left( 2\ln R\right) }\frac{1}{\left( r+1\right) ^{3}} \\
&\leq &C\ln R.
\end{eqnarray*}%
Furthermore, we have 
\begin{eqnarray*}
-\int_{M}\eta _{ij}\eta _{i}r_{j}\phi ^{2} &\leq &\frac{1}{4}\int_{M}\left(
r+1\right) \left\vert \nabla ^{2}\eta \right\vert ^{2}\phi
^{2}+\int_{D\left( 2\ln R\right) }\frac{1}{\left( r+1\right) ^{3}} \\
&\leq &\frac{1}{4}\int_{M}\left( r+1\right) \left\vert \nabla ^{2}\eta
\right\vert ^{2}\phi ^{2}+C\ln R.
\end{eqnarray*}%
Finally, we estimate the last term in (\ref{b13}) by 
\begin{eqnarray*}
-2\int_{M}\eta _{ij}\eta _{i}\phi _{j}\phi (r+1) &\leq &\frac{1}{4}%
\int_{M}\left( r+1\right) \left\vert \nabla ^{2}\eta \right\vert ^{2}\phi
^{2}+4\int_{M}|\nabla \eta |^{2}|\nabla \phi |^{2}(r+1) \\
&\leq &\frac{1}{4}\int_{M}\left( r+1\right) \left\vert \nabla ^{2}\eta
\right\vert ^{2}\phi ^{2}+C\int_{M}\frac{1}{r+1}|\nabla \phi |^{2}.
\end{eqnarray*}%
Lemma \ref{u'} and Lemma \ref{CY'} imply that on $L(\varepsilon
,2\varepsilon )$, 
\begin{equation}
|\nabla u|\leq \frac{C\varepsilon }{r}.  \label{b14}
\end{equation}%
Using the definition of $\phi $ we see that $\left\vert \nabla \phi
\right\vert \leq \frac{C}{r}.$ Therefore,
\begin{equation*}
\int_{M}\frac{1}{r+1}|\nabla \phi |^{2}\leq \int_{D\left( 2\ln R\right) }%
\frac{1}{\left( r+1\right) ^{3}}\leq C\ln R.
\end{equation*}%
We have proved that 
\begin{equation*}
-2\int_{M}\eta _{ij}\eta _{i}\phi _{j}\phi (r+1)\leq \frac{1}{4}%
\int_{M}\left( r+1\right) \left\vert \nabla ^{2}\eta \right\vert ^{2}\phi
^{2}+C\ln R.
\end{equation*}%
Plugging these estimates into (\ref{b13}) implies that%
\begin{equation}
\frac{C}{\ln R}\int_{M}\left\vert \nabla ^{2}\eta \right\vert ^{2}\left(
r+1\right) \phi ^{2}\leq C.  \label{b15}
\end{equation}

Since $f$ is harmonic on $M\setminus D$, we similarly have 
\begin{eqnarray*}
\int_{M\setminus D}\left\vert \nabla ^{2}f\right\vert ^{2}r\phi ^{2}
&=&-\int_{M\setminus D}\mathrm{Ric}\left( \nabla f,\nabla f\right) r\phi
^{2}-\int_{M\setminus D}f_{ij}f_{i}r_{j}\phi ^{2} \\
&&-2\int_{M\setminus D}f_{ij}f_{i}\phi _{j}r\phi -\int_{\partial
D}f_{ij}f_{i}\nu _{j}r\phi ^{2} \\
&\leq &\frac{1}{2}\int_{M\setminus D}\left\vert \nabla ^{2}f\right\vert
^{2}r\phi ^{2}+C.
\end{eqnarray*}
This proves that
\begin{equation}
\frac{C}{\ln R}\int_{M\setminus D}\left\vert \nabla ^{2}f\right\vert
^{2}\left( r+1\right) \phi ^{2}\leq C.  \label{b16}
\end{equation}
In conclusion, plugging (\ref{b12}), (\ref{b15}), and (\ref{b16}) into (\ref{b11}) implies
that%
\begin{equation*}
\int_{M\setminus D}\left\vert \nabla \left( \Delta
u\right) \right\vert \phi ^{2}\leq C.
\end{equation*}
Consequently, (\ref{b10}) becomes%
\begin{eqnarray}
&&\int_{M\setminus D}\left( \Delta \left\vert \nabla u\right\vert \right)
\phi ^{2}  \label{b17} \\
&\geq &\int_{M\setminus D}\left( \left\vert u_{ij}\right\vert
^{2}-\left\vert \nabla \left\vert \nabla u\right\vert \right\vert ^{2}+%
\mathrm{Ric}\left( \nabla u,\nabla u\right) \right) \left\vert \nabla
u\right\vert ^{-1}\phi ^{2}-C.  \notag
\end{eqnarray}%
By the co-area formula, we have%
\begin{eqnarray*}
&&\int_{M\setminus D}\left( \left\vert u_{ij}\right\vert ^{2}-\left\vert
\nabla \left\vert \nabla u\right\vert \right\vert ^{2}+\mathrm{Ric}\left(
\nabla u,\nabla u\right) \right) \left\vert \nabla u\right\vert ^{-1}\phi
^{2} \\
&=&\int_{\varepsilon }^{1}\phi ^{2}\left( t\right) \int_{\left\{ u=t\right\}
}\left( \left\vert u_{ij}\right\vert ^{2}-\left\vert \nabla \left\vert
\nabla u\right\vert \right\vert ^{2}+\mathrm{Ric}\left( \nabla u,\nabla
u\right) \right) \left\vert \nabla u\right\vert ^{-2}dt \\
&=&\int_{\varepsilon }^{1}\phi ^{2}\left( t\right) \int_{l\left( t\right)
}\left( \left\vert u_{ij}\right\vert ^{2}-\left\vert \nabla \left\vert
\nabla u\right\vert \right\vert ^{2}+\mathrm{Ric}\left( \nabla u,\nabla
u\right) \right) \left\vert \nabla u\right\vert ^{-2}dt \\
&&+\int_{\varepsilon }^{1}\phi ^{2}\left( t\right) \int_{\widetilde{l}\left(
t\right) }\left( \left\vert u_{ij}\right\vert ^{2}-\left\vert \nabla
\left\vert \nabla u\right\vert \right\vert ^{2}+\mathrm{Ric}\left( \nabla
u,\nabla u\right) \right) \left\vert \nabla u\right\vert ^{-2}dt,
\end{eqnarray*}%
where 
\begin{equation}
\widetilde{l}\left( t\right) =\partial \widetilde{L}\left( t,\infty \right)
\label{b18}
\end{equation}%
and%
\begin{equation}
\widetilde{L}\left( t,\infty \right) =\left( \left\{ u>t\right\} \setminus
L\left( t,\infty \right) \right) \cap L\left( \varepsilon ,\infty \right) .
\label{b19}
\end{equation}
By the Kato inequality and noting that $\mathrm{Ric}\left( \nabla u,\nabla
u\right) \geq -\frac{C}{\left( r+1\right) ^{2}}\left\vert \nabla
u\right\vert ^{2}$ we have%
\begin{eqnarray*}
&&\int_{\varepsilon }^{1}\phi ^{2}\left( t\right) \int_{\widetilde{l}\left(
t\right) }\left( \left\vert u_{ij}\right\vert ^{2}-\left\vert \nabla
\left\vert \nabla u\right\vert \right\vert ^{2}+\mathrm{Ric}\left( \nabla
u,\nabla u\right) \right) \left\vert \nabla u\right\vert ^{-2}dt \\
&\geq &-C\int_{\varepsilon }^{1}\phi ^{2}\left( t\right) \int_{\left\{
u=t\right\} }\frac{1}{\left( r+1\right) ^{2}}dt \\
&\geq &-C\int_{L\left( \varepsilon ,1\right) }\frac{1}{\left( r+1\right) ^{2}%
}\left\vert \nabla u\right\vert ,
\end{eqnarray*}%
where the last line follows from the co-area formula. However,%
\begin{eqnarray}
\int_{M\setminus D}\frac{1}{r^{2}}\left\vert \nabla u\right\vert &\leq
&\int_{M\setminus D}\frac{1}{r^{2}}\left\vert \nabla f\right\vert +\frac{1}{%
\ln R}\int_{D(2\,\ln R)}\frac{1}{\left( r+1\right) ^{3}}  \label{b20} \\
&\leq &C.  \notag
\end{eqnarray}%
In conclusion, 
\begin{equation*}
\int_{\varepsilon }^{1}\phi ^{2}\left( t\right) \int_{\widetilde{l}\left(
t\right) }\left( \left\vert u_{ij}\right\vert ^{2}-\left\vert \nabla
\left\vert \nabla u\right\vert \right\vert ^{2}+\mathrm{Ric}\left( \nabla
u,\nabla u\right) \right) \left\vert \nabla u\right\vert ^{-2}dt\geq -C.
\end{equation*}%
Thus, we have that
\begin{eqnarray*}
&&\int_{M\setminus D}\left( \left\vert u_{ij}\right\vert ^{2}-\left\vert
\nabla \left\vert \nabla u\right\vert \right\vert ^{2}+\mathrm{Ric}\left(
\nabla u,\nabla u\right) \right) \left\vert \nabla u\right\vert ^{-1}\phi
^{2} \\
&\geq &\int_{\varepsilon }^{1}\phi ^{2}\left( t\right) \int_{l\left(
t\right) }\left( \left\vert u_{ij}\right\vert ^{2}-\left\vert \nabla
\left\vert \nabla u\right\vert \right\vert ^{2}+\mathrm{Ric}\left( \nabla
u,\nabla u\right) \right) \left\vert \nabla u\right\vert ^{-2}dt-C.
\end{eqnarray*}%
Rewrite it into
\begin{eqnarray}
&&\int_{M\setminus D}\left( \left\vert u_{ij}\right\vert ^{2}-\left\vert
\nabla \left\vert \nabla u\right\vert \right\vert ^{2}+\mathrm{Ric}\left(
\nabla u,\nabla u\right) \right) \left\vert \nabla u\right\vert ^{-1}\phi
^{2}  \label{b21} \\
&\geq &\int_{\varepsilon }^{1}\phi ^{2}\left( t\right) \int_{l\left( t\right)
\cap \overline{M_{t}}}\left( \left\vert u_{ij}\right\vert ^{2}-\left\vert
\nabla \left\vert \nabla u\right\vert \right\vert ^{2}+\mathrm{Ric}\left(
\nabla u,\nabla u\right) \right) \left\vert \nabla u\right\vert ^{-2}dt 
\notag \\
&&+\int_{\varepsilon }^{1}\phi ^{2}\left( t\right) \int_{l\left( t\right)
\cap \left( M\setminus \overline{M_{t}}\right) }\left( \left\vert
u_{ij}\right\vert ^{2}-\left\vert \nabla \left\vert \nabla u\right\vert
\right\vert ^{2}+\mathrm{Ric}\left( \nabla u,\nabla u\right) \right)
\left\vert \nabla u\right\vert ^{-2}dt -C.  \notag
\end{eqnarray}%
Since the Ricci curvature is bounded below by $\mathrm{Ric}\left( \nabla
u,\nabla u\right) \left\vert \nabla u\right\vert ^{-2}\geq -\frac{C}{\left(
r+1\right) ^{2}}$, together with the Kato inequality $\left\vert
u_{ij}\right\vert ^{2}\geq \left\vert \nabla \left\vert \nabla u\right\vert
\right\vert ^{2}$, it follows that%
\begin{eqnarray*}
&&\int_{\varepsilon }^{1}\phi ^{2}\left( t\right) \int_{l\left( t\right)
\cap \left( M\setminus \overline{M_{t}}\right) }\left( \left\vert
u_{ij}\right\vert ^{2}-\left\vert \nabla \left\vert \nabla u\right\vert
\right\vert ^{2}+\mathrm{Ric}\left( \nabla u,\nabla u\right) \right)
\left\vert \nabla u\right\vert ^{-2}dt \\
&\geq &-C\int_{\varepsilon }^{1}\int_{\left\{ u=t\right\} }\frac{1}{r^{2}}dt
\\
&\geq &-C\int_{M\setminus D}\frac{1}{r^{2}}\left\vert \nabla u\right\vert \\
&\geq &-C,
\end{eqnarray*}%
where the last line follows from (\ref{b20}). In conclusion, (\ref{b21})
becomes%
\begin{eqnarray}
&&\int_{M\setminus D}\left( \left\vert u_{ij}\right\vert ^{2}-\left\vert
\nabla \left\vert \nabla u\right\vert \right\vert ^{2}+\mathrm{Ric}\left(
\nabla u,\nabla u\right) \right) \left\vert \nabla u\right\vert ^{-1}\phi
^{2}  \label{b22} \\
&\geq &\int_{\varepsilon }^{1}\phi ^{2}\left( t\right) \int_{l\left(
t\right) \cap \overline{M_{t}}}\left( \left\vert u_{ij}\right\vert
^{2}-\left\vert \nabla \left\vert \nabla u\right\vert \right\vert ^{2}+%
\mathrm{Ric}\left( \nabla u,\nabla u\right) \right) \left\vert \nabla
u\right\vert ^{-2}dt-C.  \notag
\end{eqnarray}%
By (\ref{a1}) and the Kato inequality, 
\begin{equation}
\left( \left\vert u_{ij}\right\vert ^{2}-\left\vert \nabla \left\vert \nabla
u\right\vert \right\vert ^{2}+\mathrm{Ric}\left( \nabla u,\nabla u\right)
\right) \left\vert \nabla u\right\vert ^{-2}\geq \frac{1}{2}S-\frac{1}{2}
S_{t}.  \label{b23}
\end{equation}%
Note that by the Gauss-Bonnet theorem, in view of Lemma \ref{l'}, one has
\begin{equation*}
\int_{l\left( t\right) \cap \overline{M_{t}}}S_{t}\leq 8\pi m\leq C,
\end{equation*}%
for all regular values $t\in \left( 0,1\right)$. 
Therefore, we conclude from (\ref{b23}) that%
\begin{equation*}
\int_{l\left( t\right) \cap \overline{M_{t}}}\left( \left\vert
u_{ij}\right\vert ^{2}-\left\vert \nabla \left\vert \nabla u\right\vert
\right\vert ^{2}+\mathrm{Ric}\left( \nabla u,\nabla u\right) \right)
\left\vert \nabla u\right\vert ^{-2}\geq \frac{1}{2}\int_{l\left( t\right)
\cap \overline{M_{t}}}S-C.
\end{equation*}%
Together with (\ref{b22}), this implies that%
\begin{eqnarray}
&&\int_{M\setminus D}\left( \left\vert u_{ij}\right\vert ^{2}-\left\vert
\nabla \left\vert \nabla u\right\vert \right\vert ^{2}+\mathrm{Ric}\left(
\nabla u,\nabla u\right) \right) \left\vert \nabla u\right\vert ^{-1}\phi
^{2}  \label{b24} \\
&\geq &\frac{1}{2}\int_{\varepsilon }^{1}\phi ^{2}\left( t\right)
\int_{l\left( t\right) \cap \overline{M_{t}}}Sdt-C.  \notag
\end{eqnarray}
On the other hand, 
\begin{eqnarray*}
\int_{M\setminus D}\left( \Delta \left\vert \nabla u\right\vert \right) \phi
^{2} &=&\int_{M\setminus D}\left\vert \nabla u\right\vert \Delta \phi
^{2}-\int_{\partial D}\left\vert \nabla u\right\vert _{\nu } \\
&\leq &\frac{C}{\varepsilon }\int_{L\left( \varepsilon ,2\varepsilon \right)
}\left\vert \nabla u\right\vert \,\left\vert \Delta u\right\vert +\frac{C}{%
\varepsilon ^{2}}\int_{L\left( \varepsilon ,2\varepsilon \right) }\left\vert
\nabla u\right\vert ^{3}+C.
\end{eqnarray*}%
First, by (\ref{b14}) and Lemma \ref{u'} we have 
\begin{equation*}
\frac{C}{\varepsilon }\int_{L\left( \varepsilon ,2\varepsilon \right)
}\left\vert \nabla u\right\vert \,\left\vert \Delta u\right\vert \leq \frac{C%
}{\ln R}\int_{D\left( 2\ln R\right) \setminus D}\left( \frac{1}{r^{3}}%
+\left\vert \nabla f\right\vert ^{2}\right) \leq C,
\end{equation*}%
where the last inequality follows by (\ref{b12}). Similarly, we get 
\begin{equation*}
\frac{C}{\varepsilon ^{2}}\int_{L\left( \varepsilon ,2\varepsilon \right)
}\left\vert \nabla u\right\vert ^{3}\leq C\int_{D\left( 2\ln R\right)
\setminus D}\frac{1}{r^{2}}\left\vert \nabla u\right\vert \leq C,
\end{equation*}%
where the last inequality is by (\ref{b20}). In conclusion, 
\begin{equation*}
\int_{M\setminus D}\left( \Delta \left\vert \nabla u\right\vert \right) \phi
^{2}\leq C.
\end{equation*}%
Hence, (\ref{b17}) and (\ref{b24}) imply that 
\begin{equation*}
\int_{2\varepsilon }^{1}\left( \int_{l\left( t\right) \cap \overline{M_{t}}%
}S\right) dt\leq C.
\end{equation*}%
As $S$ is bounded from below by a positive constant and $l\left( t\right) \cap \overline{M_{t}}%
=\partial M_{t},$ it follows that 
\begin{equation}
\int_{2\varepsilon }^{1}\mathrm{A}\left( \partial M_{t}\right) dt\leq C
\label{b25}
\end{equation}%
for a constant $C>0$ independent of $\varepsilon >0$ and $R>0.$ 

Recall that
the barrier function defined by (\ref{n}) satisfies $\int_{\partial D}\left(
-\frac{\partial f}{\partial \nu }\right) =C>0$. Since $f$ is harmonic on the
bounded domain $M\setminus \left( M_{t}\cup D\right) $, it follows that 
\begin{equation}
C=-\int_{\partial D}\frac{\partial f}{\partial \nu }=-\int_{\partial M_{t}}%
\frac{\partial f}{\partial \nu }\leq \int_{\partial M_{t}}\left\vert \nabla
f\right\vert .  \label{b26}
\end{equation}%
According to Lemma \ref{CY'} we have 
\begin{equation}
\left\vert \nabla f\right\vert \leq \frac{C}{r}f\ \text{\ on }M\setminus D.
\label{b27}
\end{equation}%
If $x\in \partial M_{t}\cap D\left( \ln R\right),$ then $f\left( x\right)
=u\left( x\right) =t$ and we evidently get $\left\vert \nabla f\right\vert
\left( x\right) \leq Ct.$ If $x\in \partial M_{t}\setminus D\left( \ln
R\right),$ then (\ref{b27}) implies $\left\vert \nabla f\right\vert \left(
x\right) \leq \frac{C}{\ln R}\leq C \varepsilon^2$ in view of (\ref{b8}).
In conclusion,
\begin{equation*}
\left\vert \nabla f\right\vert \leq Ct\ \text{ on }\partial M_{t}\text{ for
all }t\in \left( 2\varepsilon ,1\right) .
\end{equation*}%
Plugging this into (\ref{b26}) one obtains that 
\begin{equation*}
\mathrm{A}\left( \partial M_{t}\right) \geq \frac{C}{t}\text{ \ for all }%
t\in \left( 2\varepsilon ,1\right) .
\end{equation*}%
However, this contradicts (\ref{b25}) if $\varepsilon $ is sufficiently
small. In conclusion, $\left( M,g\right) $ must be parabolic.
\end{proof}

\section{Proof of Theorem \protect\ref{A3} \label{sect4}}

We now turn to the proof of Theorem \ref{A3}.  Since $M$ is assumed to have finite first Betti number and 
finitely many ends, we may assume that $D\subset M$ is large enough so that all representatives $H_1(M)$ lie in $D$ and $M\setminus D$ has the maximal number of ends. 
The monotonicity
formula in Lemma \ref{M} gives the following integral estimate.

\begin{lemma}
\label{SL}Let $\left( M^3,g\right) $ be a parabolic complete three-dimensional
manifold with non-negative scalar curvature,  finite first Betti number, and finite number of ends. Assume
the barrier $u$ defined on an end $E$ of $M$ by (\ref{p}) has bounded gradient, $%
\sup_{E}\left\vert \nabla u\right\vert <\infty $. Then there exists a
constant $\Upsilon >0$ such that 
\begin{equation*}
\int_{L(1,t)}S|\nabla u|\leq 8\pi t+\Upsilon ,
\end{equation*}%
for all $t>1$.
\end{lemma}

\begin{proof}
Applying Lemma \ref{M} with $\alpha =0$ to $u$ we get%
\begin{equation}
\frac{1}{2}\int_{L(1,t)}S|\nabla u|\leq \frac{dw}{dt}-3\frac{w\left(
t\right) }{t}+4\pi t+\Upsilon _{0}  \label{c1}
\end{equation}%
for all $t>1,$ where%
\begin{equation*}
\Upsilon _{0}=-\left( \frac{dw}{dt} \big\vert _{t=1}-3w\left( 1\right) +4\pi \right) .
\end{equation*}
Since $u$ is harmonic, we know that $\int_{l\left( t\right) }\left\vert
\nabla u\right\vert $ is a constant. Together with the assumption that $u$
has bounded gradient, we obtain for all $t>1$
\begin{equation*}
w\left( t\right) =\int_{l\left( t\right) }\left\vert \nabla u\right\vert
^{2}\leq \Upsilon _{1},
\end{equation*}%
where $\Upsilon_1$ is a constant. So, for every $t>1$ there exists $\xi \in \left( t,t+1\right) $ such that $%
\frac{dw}{dt}\left( \xi \right) \leq \Upsilon _{1}$. Then (\ref{c1}) implies%
\begin{equation*}
\int_{L(1,t)}S|\nabla u|\leq \int_{L(1,\xi )}S|\nabla u|\leq 8\pi t+\Upsilon,
\end{equation*}%
where $\Upsilon =2\Upsilon _{0}+2\Upsilon _{1}+8\pi .$ This proves the
result.
\end{proof}

In the following, we again normalize the barrier function $u$ on the end $E$ such that 
\begin{equation}
\int_{l\left( t\right) }\left\vert \nabla u\right\vert =1  \label{c2}
\end{equation}%
for all $t\geq 1$. Assume that $D\subset B_p (r_0)$, for some $r_0>0$ large enough. 

\begin{lemma}
\label{MR}Let $\left( M^3,g\right) $ be a parabolic complete three-dimensional
Riemannian manifold. Assume that the Ricci curvature of $M$ is bounded below
by $\mathrm{Ric}\geq -k\left( r\left( x\right) \right) $, for a continuous
nonincreasing function $k\left( r\right) $ satisfying $\int_{0}^{\infty
}rk\left( r\right) dr<\infty .$ Normalize the barrier $u$ of an end $E$ as in (\ref{c2}).
Then
\begin{equation*}
M\left( R\right) =\sup_{E\cap\left(B_{p}\left( 2R\right) \setminus B_{p}\left( R\right)\right)
}\left\vert \nabla u\right\vert
\end{equation*}
satisfies 
\begin{equation*}
M^{2}\left( R\right) \leq CR \left( \inf_{x\in E\cap\left(B_{p}\left( 2R\right)
\setminus B_{p}\left( R\right) \right)} \mathrm{V}_{x}\left( \frac{R}{2}\right)
\right) ^{-1} \left( M\left( \frac{R}{2}\right) +M\left( R\right) +M\left(
2R\right) \right)
\end{equation*}
for a constant $C$ depending only on $k$ and for all $R>2r_{0}$.
\end{lemma}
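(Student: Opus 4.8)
The plan is to view $|\nabla u|^{2}$ as a nonnegative subsolution of a uniformly controlled elliptic equation on balls of radius $\sim R$, apply a mean value inequality, and then bound the resulting integral of $|\nabla u|^{2}$ using the flux normalization (\ref{m9}) together with the oscillation of $u$ on such a ball.

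First I would record the differential inequality. Since $u$ is harmonic on $E$, the Bochner formula gives, with no division by $|\nabla u|$,
\begin{equation*}
\Delta |\nabla u|^{2}=2|\nabla^{2}u|^{2}+2\,\mathrm{Ric}(\nabla u,\nabla u)\geq -2k(r)\,|\nabla u|^{2},
\end{equation*}
so $w:=|\nabla u|^{2}$ satisfies $\Delta w+2k(r)\,w\geq 0$ on $E$. The structural fact I would extract from the hypothesis on $k$ is that, $k$ being nonincreasing,
\begin{equation*}
\tfrac{1}{2}\,r^{2}k(r)\leq \int_{0}^{r}s\,k(s)\,ds\leq \Lambda:=\int_{0}^{\infty}s\,k(s)\,ds<\infty,
\end{equation*}
hence $k(r)\leq 2\Lambda r^{-2}$. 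Now fix $R>2R_{0}$ and $x\in (B_{p}(2R)\setminus B_{p}(R))\cap E$. Then $B_{x}(R/2)\subset\{\,R/2\leq r\leq 5R/2\,\}$, so it is disjoint from $B_{p}(R_{0})$ and, being connected, lies entirely in $E$ once $R_{0}$ has been fixed large enough. On $B_{x}(R/2)$ one then has $\mathrm{Ric}\geq -8\Lambda R^{-2}$ and $\Delta w+16\Lambda R^{-2}w\geq 0$, so, rescaled to unit radius, both the Ricci lower bound and the zeroth order coefficient are bounded by a constant depending only on $\Lambda$, hence only on $k$.

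Next I would invoke the standard (Moser iteration) mean value inequality for such subsolutions: there is $C=C(k)$ with
\begin{equation*}
|\nabla u|^{2}(x)\leq \frac{C}{\mathrm{V}_{x}(R/2)}\int_{B_{x}(R/2)}|\nabla u|^{2}.
\end{equation*}
To estimate the integral I would use the co-area formula with respect to $u$ and the normalization (\ref{m9}): as $B_{x}(R/2)\subset E$, for almost every $t$ one has $\int_{\{u=t\}\cap B_{x}(R/2)}|\nabla u|\leq \int_{l(t)}|\nabla u|=1$, so
\begin{equation*}
\int_{B_{x}(R/2)}|\nabla u|^{2}=\int_{-\infty}^{\infty}\Big(\int_{\{u=t\}\cap B_{x}(R/2)}|\nabla u|\Big)\,dt\leq \mathrm{osc}_{B_{x}(R/2)}\,u,
\end{equation*}
since the integrand is $\leq 1$ and vanishes unless $t$ lies in the image $u(B_{x}(R/2))$, an interval of length $\mathrm{osc}_{B_{x}(R/2)}u$. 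Finally, since $B_{x}(R/2)$ is connected, any two of its points are joined inside it by a broken minimizing geodesic through $x$ of length $<R$, and $B_{x}(R/2)\subset B_{p}(4R)\setminus B_{p}(R/2)$, on which $|\nabla u|\leq M(R/2)+M(R)+M(2R)$; integrating the gradient along such a path gives $\mathrm{osc}_{B_{x}(R/2)}u< R\,(M(R/2)+M(R)+M(2R))$. Combining these estimates and taking the supremum over $x$, while bounding $\mathrm{V}_{x}(R/2)^{-1}$ by the indicated infimum over the annulus, yields the asserted inequality.

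The main obstacle, and essentially the only non-formal point, is the mean value inequality with a constant independent of $R$: this is exactly what the scale-invariant bound $k(r)\leq 2\Lambda r^{-2}$ provides, since it forces both the Ricci lower bound and the potential $2k(r)$ to be uniformly small at scale $R/2$ throughout $\{\,R/2\leq r\leq 5R/2\,\}$, the regime in which Moser iteration delivers a uniform constant. A secondary technical point is to choose $R_{0}$ large enough that $B_{x}(R/2)$ stays inside the end $E$ for all $x$ with $r(x)\in[R,2R]$ and $R>2R_{0}$; this uses only connectedness of these balls and their distance from the fixed compact set, and it is needed both so that $|\nabla u|$ makes sense on $B_{x}(R/2)$ and so that the level-set integrals above are bounded by the flux $1$.
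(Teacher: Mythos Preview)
Your proof is correct and follows essentially the same route as the paper's: apply a scale-invariant mean value inequality to $|\nabla u|$ (the paper uses the inequality $\Delta|\nabla u|\geq -CR^{-2}|\nabla u|$ and cites Li--Tam, while you equivalently apply Moser iteration to $|\nabla u|^{2}$), then bound $\int_{B_x(R/2)}|\nabla u|^{2}$ by the oscillation of $u$ via the co-area formula and the flux normalization, and finally control the oscillation by $R$ times the annular maxima of $|\nabla u|$. The only cosmetic difference is that the paper enlarges $B_{x}(R/2)$ to the full sublevel strip $L(\alpha_{x},\beta_{x})$ before applying the co-area formula, whereas you apply it directly on the ball; and the paper picks a single $x$ realizing $M(R)$ rather than taking a supremum at the end.
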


\begin{proof}
We follow an argument in \cite{NR}. Pick $x\in E\cap \partial B_{p}\left( t\right) 
$, where $R\leq t \leq 2R $, such that $M\left( R\right)
=\left\vert \nabla u\right\vert \left( x\right) $. On $B_{x}\left( \frac{R}{2%
}\right) $ we have $\mathrm{Ric}\geq -\frac{C}{R^{2}}$ and $\Delta
\left\vert \nabla u\right\vert \geq -\frac{C}{R^{2}}\left\vert \nabla
u\right\vert $. Applying the mean value inequality from \cite{LT3} we get%
\begin{equation*}
\left\vert \nabla u\right\vert ^{2}\left( x\right) \leq \frac{C}{\mathrm{V}%
_{x}\left( \frac{R}{2}\right) }\int_{B_{x}\left( \frac{R}{2}\right)
}\left\vert \nabla u\right\vert ^{2},
\end{equation*}%
for a constant $C$ independent of $R$. 
This shows that%
\begin{equation}
M^{2}\left( R\right) \leq \frac{C}{\mathrm{V}_{x}\left( \frac{R}{2}\right) }%
\int_{L\left( \alpha _{x},\beta _{x}\right) }\left\vert \nabla u\right\vert
^{2},  \label{c3}
\end{equation}%
where%
\begin{equation*}
\alpha _{x}=\min_{B_{x}\left( \frac{R}{2}\right) }u\text{ and }\beta
_{x}=\max_{B_{x}\left( \frac{R}{2}\right) }u.
\end{equation*}%
By the co-area formula and (\ref{c2}) we have 
\begin{equation*}
\int_{L\left( \alpha _{x},\beta _{x}\right) }\left\vert \nabla u\right\vert
^{2}=\int_{\alpha _{x}}^{\beta _{x}}\int_{l\left( t\right) }\left\vert
\nabla u\right\vert dt=\beta _{x}-\alpha _{x}\leq R\sup_{B_{x}\left( \frac{R%
}{2}\right) }\left\vert \nabla u\right\vert .
\end{equation*}%
Since $B_x\left(\frac{R}{2}\right) \subset B_{p}\left( \frac{3R}{2}\right) \setminus B_{p}\left( \frac{R}{2}\right)$, it
follows that%
\begin{equation*}
\int_{L\left( \alpha _{x},\beta _{x}\right) }\left\vert \nabla u\right\vert
^{2}\leq R\left( M\left( \frac{R}{2}\right) +M\left( R\right) +M\left(
2R\right) \right) .
\end{equation*}%
Together with (\ref{c3}) we conclude that%
\begin{equation*}
M^{2}\left( R\right) \leq \frac{CR}{\mathrm{V}_{x}\left( \frac{R}{2}\right) }%
\left( M\left( \frac{R}{2}\right) +M\left( R\right) +M\left( 2R\right)
\right)
\end{equation*}%
for some $x\in E\cap\left( B_{p}\left( 2R\right) \setminus B_{p}\left( R\right)\right) $ and
all $R>2r_{0}.$ This proves the result.
\end{proof}

We are now ready to prove Theorem \ref{A3}.

\begin{theorem}
\label{T2}Let $\left( M^3,g\right) $ be a three-dimensional complete
noncompact Riemannian manifold with finitely many ends and finite first
Betti number $b_{1}(M)<\infty .$ Assume that the Ricci curvature of $M$ is
bounded from below by $\mathrm{Ric}\left( x\right) \geq -k\left( r\left(
x\right) \right) $ for a continuous non-increasing function $k\left(
r\right) $ satisfying $\int_{0}^{\infty }rk\left( r\right) dr<\infty .$ Then
there exists a constant $C_{0}>0,$ depending only on $k,$ such that 
\begin{equation*}
\liminf_{x\rightarrow \infty }S\left( x\right) \leq \frac{C_{0}}{\mathrm{V}%
_{p}\left( 1\right) },
\end{equation*}%
where $\mathrm{V}_{p}\left( 1\right) $ is the volume of the geodesic ball $%
B_{p}\left( 1\right) .$
\end{theorem}

\begin{proof}
Let us assume by contradiction that 
\begin{equation}
\liminf_{x\rightarrow \infty }S\left( x\right) >\frac{\Gamma }{\mathrm{V}%
_{p}\left( 1\right) },  \label{c4}
\end{equation}%
where $\Gamma >0$ is a large enough constant to be specified later. In
particular,%
\begin{equation}
S\geq \frac{1}{2}\frac{\Gamma }{\mathrm{V}_{p}\left( 1\right) }\text{ \ on }%
M\setminus B_{p}\left( R_{0}\right)   \label{c5}
\end{equation}%
for some large enough $R_{0}.$ Below, $C$ denotes a constant that depends
only on $k.$

By Theorem \ref{T'} we know that $M$ is parabolic. By localizing a barrier
function to an end as above, we may assume without loss of generality that $M
$ has only one end. According to Nakai \cite{N}, there exists a proper
harmonic function $u:M\setminus D\rightarrow \left( 0,\infty \right) $ with $%
u=0$ on $\partial D.$ We normalize $u$ as in (\ref{c2}).

Since $M$ has asymptotically nonnegative Ricci curvature, a volume
comparison result in \cite{LT2} implies%
\begin{equation*}
\mathrm{V}_{x}\left( \frac{R}{2}\right) \geq \frac{1}{C}R\mathrm{V}%
_{p}\left( 1\right) 
\end{equation*}%
for any $x\in \partial B_{p}\left( R\right) .$ Choosing $R_{0}>0$ large enough so that  $D\subset B_{p}(R_{0})$ and applying Lemma %
\ref{MR} we get%
\begin{equation}
M^{2}\left( R\right) \leq \frac{C_1}{\mathrm{V}_{p}\left( 1\right) }\left(
M\left( \frac{R}{2}\right) +M\left( R\right) +M\left( 2R\right) \right) 
\label{c6}
\end{equation}%
for all $R>2R_{0}$, where $C_1$ is a constant depending only on the function $k$.  

On the other hand, by Lemma \ref{CY'} we know that $\left\vert
\nabla \ln u\right\vert \leq \frac{C}{r}$ \ on $M\setminus B_{p}\left(
2R_{0}\right) $. It follows that
\begin{equation*}
u\leq r^{C}\sup_{\partial B_{p}\left( 2R_{0}\right) }u\text{ \ \ on }%
M\setminus B_{p}\left( 2R_{0}\right) 
\end{equation*}%
and furthermore that
\begin{equation*}
\left\vert \nabla u\right\vert \leq r^{C}\sup_{\partial B_{p}\left(
2R_{0}\right) }u\text{ \ on }M\setminus B_{p}\left( 2R_{0}\right) .
\end{equation*}%
In particular, there exists $C_{2}$ with $C_{2}>2C_{1}$, where $C_1$ is the constant in (\ref{c6}),  and there exists 
$R_{1}>2R_{0}$ sufficiently large, such that
\begin{equation}
M\left( R\right) \leq \frac{1}{\mathrm{V}_{p}\left( 1\right) }R^{\frac{C_{2}%
}{2}}  \label{c7}
\end{equation}%
for all $R\geq R_{1}.$

We now claim that for $m\geq 1,$%
\begin{equation}
M\left( R\right) \leq \frac{2^{C_{2}}}{\mathrm{V}_{p}\left( 1\right) }R^{%
\frac{C_{2}}{2^{m}}}  \label{c8}
\end{equation}%
for any $R\geq 2^{m}R_{1}$.

By (\ref{c7}) we see that (\ref{c8}) holds for $m=1.$ We assume by induction
that it holds for $m$ and prove it for $m+1.$ In other words, we aim to
prove that%
\begin{equation}
M\left( R\right) \leq \frac{2^{C_{2}}}{\mathrm{V}_{p}\left( 1\right) }R^{%
\frac{C_{2}}{2^{m+1}}}  \label{c9}
\end{equation}%
for all $R\geq 2^{m+1}R_{1}.$ By the induction hypothesis (\ref{c8}) we get that%
\begin{eqnarray*}
M\left( \frac{R}{2}\right)  &\leq &\frac{2^{C_{2}}}{\mathrm{V}_{p}\left(
1\right) }\left( \frac{R}{2}\right) ^{\frac{C_{2}}{2^{m}}}, \\
M\left( R\right)  &\leq &\frac{2^{C_{2}}}{\mathrm{V}_{p}\left( 1\right) }R^{%
\frac{C_{2}}{2^{m}}}, \\
M\left( 2R\right)  &\leq &\frac{2^{C_{2}}}{\mathrm{V}_{p}\left( 1\right) }%
\left( 2R\right) ^{\frac{C_{2}}{2^{m}}}.
\end{eqnarray*}

Using (\ref{c6}) we therefore obtain%
\begin{eqnarray*}
M^{2}\left( R\right)  &\leq &\frac{2^{C_{2}}C_{1}}{\left( \mathrm{V}%
_{p}\left( 1\right) \right) ^{2}}\left( \left( \frac{1}{2}\right) ^{\frac{%
C_{2}}{2^{m}}}+1+2^{\frac{C_{2}}{2^{m}}}\right) R^{\frac{C_{2}}{2^{m}}} \\
&\leq &\frac{2^{2C_{2}}}{\left( \mathrm{V}_{p}\left( 1\right) \right) ^{2}}%
R^{\frac{C_{2}}{2^{m}}},
\end{eqnarray*}%
where the second line follows from $C_{2}>2C_{1}.$ In conclusion,%
\begin{equation*}
M\left( R\right) \leq \frac{2^{C_{2}}}{\mathrm{V}_{p}\left( 1\right) }R^{%
\frac{C_{2}}{2^{m+1}}}
\end{equation*}%
and (\ref{c9}) is proved. Hence, (\ref{c8}) holds for any $m\geq 1$ and $%
R\geq 2^{m}R_{1}.$ By taking $2^{m}=\left[ \ln R\right] ,$ this readily
implies that%
\begin{equation*}
M\left( R\right) \leq \frac{C}{\mathrm{V}_{p}\left( 1\right) }
\end{equation*}%
for all $R\geq R_{1}^{2}.$ In other words,
\begin{equation}
\sup_{M\setminus B_{p}\left( R_{2}\right) }\left\vert \nabla u\right\vert
\leq \frac{C}{\mathrm{V}_{p}\left( 1\right) }  \label{c10}
\end{equation}
for sufficiently large $R_{2}>R_{1}$, where the constant $C$ depends only on the function $k$.   It in turn implies that%
\begin{equation}
u\left( x\right) \leq \frac{C}{\mathrm{V}_{p}\left( 1\right) }r\left(
x\right) +C(R_{2})\text{ \ on }M\setminus B_{p}\left( R_{2}\right) 
\label{c11}
\end{equation}%
for some constant $C(R_{2})$ depending on $R_{2}.$ By (\ref{c11}) we
conclude that 
\begin{equation*}
B_{p}\left( R\right) \setminus B_{p}\left( R_{2}\right) \subset L\left( 1,%
\frac{C}{\mathrm{V}_{p}\left( 1\right) }R+C(R_{2})\right) .
\end{equation*}
According to Lemma \ref{SL}
we have
\begin{eqnarray}
\int_{B_{p}\left( R\right) \setminus B_{p}\left( R_{2}\right) }S\left\vert
\nabla u\right\vert  &\leq &\int_{L\left( 1,\frac{C}{\mathrm{V}_{p}\left(
1\right) }R+C(R_{2})\right) }S\left\vert \nabla u\right\vert   \label{c12} \\
&\leq &\frac{C}{\mathrm{V}_{p}\left( 1\right) }R+C(R_{2})+\Upsilon   \notag
\end{eqnarray}%
for all $R>R_{2},$ where $\Upsilon $ is a constant.

Now by (\ref{c5}) it follows that $S\geq \frac{1}{2}\frac{\Gamma }{\mathrm{V}%
_{p}\left( 1\right) }$ \ on $B_{p}\left( R\right) \setminus B_{p}\left(
R_{2}\right) $, whereas by (\ref{c2}) we have $\int_{\partial B_{p}\left(
r\right) }\frac{\partial u}{\partial r}=1$, for all $\ r>R_{2}.$ The co-area
formula then implies%
\begin{eqnarray*}
\int_{B_{p}\left( R\right) \setminus B_{p}\left( R_{2}\right) }S\left\vert
\nabla u\right\vert  &\geq &\frac{1}{2}\frac{\Gamma }{\mathrm{V}_{p}\left(
1\right) }\int_{B_{p}\left( R\right) \setminus B_{p}\left( R_{2}\right)
}\left\vert \nabla u\right\vert  \\
&\geq &\frac{1}{2}\frac{\Gamma }{\mathrm{V}_{p}\left( 1\right) }%
\int_{R_{2}}^{R}\left( \int_{\partial B_{p}\left( r\right) }\frac{\partial u%
}{\partial r}\right) dr \\
&\geq &\frac{1}{2}\frac{\Gamma }{\mathrm{V}_{p}\left( 1\right) }\left(
R-R_{2}\right) .
\end{eqnarray*}%
From (\ref{c12}) we get 
\begin{equation*}
\frac{\Gamma -C}{\mathrm{V}_{p}\left( 1\right) }R\leq C(R_{2})+\Upsilon .
\end{equation*}%
Since $R>R_{2}$ is arbitrary, this forces $\Gamma \leq C$ and the theorem is
proved.
\end{proof}

\end{document}